\DeclareFontFamily{OT2}{cmr}{\hyphenchar\font45 }
\DeclareFontShape{OT2}{cmr}{m}{n}{%
   <5><6><7><8><9>gen*wncyr%
   <10><10.95><12><14.4><17.28><20.74><24.88>wncyr10}{}
\DeclareMathAlphabet{\mathcyr}{OT2}{cmr}{m}{n}
\DeclareMathAlphabet{\mathcyb}{OT2}{cmr}{b}{n}
\SetMathAlphabet{\mathcyr}{bold}{OT2}{cmr}{b}{n}
\newtheorem{thm}{Theorem}[section]
\newtheorem{lem}[thm]{Lemma}
\newtheorem{prop}[thm]{Proposition}
\theoremstyle{definition}
\newtheorem{defn}[thm]{Definition}
\theoremstyle{remark}
\newtheorem{rem}[thm]{Remark}
\newcommand{\A}{\mathcal{A}}
\newcommand{\F}{\mathcal{F}}
\newcommand{\FF}{\mathbb{F}}
\newcommand{\Q}{\mathbb{Q}}
\newcommand{\R}{\mathbb{R}}
\newcommand{\SSS}{\mathcal{S}}
\newcommand{\Z}{\mathbb{Z}}
\newcommand{\ZZ}{\mathfrak{Z}}
\newcommand{\ZZZ}{\mathcal{Z}}
\newcommand{\I}{\mathcal{I}}
\newcommand{\harb}{\mathbin{\overline{\ast}}}
\newcommand{\sha}{\mathbin{\overline{\mathcyr{sh}}}}
\DeclareMathOperator{\dep}{dep}
\DeclareMathOperator{\wt}{wt}
\begin{document}
\title[Ohno type relations for MZSVs]{Ohno type relations for classical and finite multiple zeta-star values}
\author{Minoru Hirose}
\address[Minoru Hirose]{Faculty of Mathematics, Kyushu University
 744, Motooka, Nishi-ku, Fukuoka, 819-0395, Japan}
\email{m-hirose@math.kyushu-u.ac.jp}
\author{Kohtaro Imatomi}
\address[Kohtaro Imatomi]{}
\email{k-imatomi@math.kyushu-u.ac.jp}
\author{Hideki Murahara}
\address[Hideki Murahara]{Nakamura Gakuen University Graduate School,
 5-7-1, Befu, Jonan-ku, Fukuoka, 814-0198, Japan} 
\email{hmurahara@nakamura-u.ac.jp}
\author{Shingo Saito}
\address[Shingo Saito]{Faculty of Arts and Science, Kyushu University,
 744, Motooka, Nishi-ku, Fukuoka, 819-0395, Japan}
\email{ssaito@artsci.kyushu-u.ac.jp}
\keywords{multiple zeta values, finite multiple zeta values, symmetric multiple zeta values, Ohno's relation}
\subjclass[2010]{Primary 11M32; Secondary 05A19}

\begin{abstract}
Ohno's relation is a generalization of both the sum formula and the duality formula for multiple zeta values.
Oyama gave a similar relation for finite multiple zeta values, defined by Kaneko and Zagier.
In this paper, we prove relations of similar nature for both multiple zeta-star values and finite multiple zeta-star values.
Our proof for multiple zeta-star values uses the linear part of Kawashima's relation.
\end{abstract}
\maketitle

\section{Introduction}
\subsection{Classical multiple zeta(-star) values}
 For $k_1,\dots,k_r\in \Z_{\ge1}$ with $k_r \ge 2$, the multiple zeta values (MZVs) and the multiple zeta-star values (MZSVs) are defined by 
\begin{align*}
 \zeta(k_1,\dots, k_r)=\sum_{1\le n_1<\cdots <n_r} \frac {1}{n_1^{k_1}\cdots n_r^{k_r}}, \\
 \zeta^\star (k_1,\dots, k_r)=\sum_{1\le n_1\le \cdots \le n_r} \frac {1}{n_1^{k_1}\cdots n_r^{k_r}}. 
\end{align*} 
The MZ(S)Vs are known to satisfy the sum formula (Proposition \ref{sum}) and the duality formula (Proposition \ref{duality}). 
We write $(\{1\}^m)=( \underbrace{1,\ldots ,1}_{m})$. 
\begin{prop}[Sum formula; Granville \cite{granville_1997}, Zagier] \label{sum}
 For $k,r\in\Z_{\ge1}$ with $k\ge r+1$, we have
 \begin{align*}
 \sum_{\substack{k_1+\dots+k_r=k\\k_r\ge2,k_i\ge1(1\le i \le r)}} \zeta (k_1,\dots,k_r)&= \zeta(k), \\
 \sum_{\substack{k_1+\dots+k_r=k\\k_r\ge2,k_i\ge1(1\le i \le r)}} \zeta^\star (k_1,\dots,k_r)&= \binom{k-1}{r-1} \zeta(k).
 \end{align*}
\end{prop}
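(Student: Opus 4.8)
The first identity is the classical sum formula for MZVs, and I would establish it by the partial-fraction argument of Granville and Zagier. Fix $n_1<\cdots<n_r$ and put $x_i=1/n_i$; the inner sum over compositions $k_1+\cdots+k_r=k$ with $k_r\ge2$ is the coefficient of $z^k$ in $\prod_{i=1}^{r}\frac{x_iz}{1-x_iz}$ with the $k_r=1$ contribution removed, and expanding this rational function by partial fractions in $z$ writes that coefficient as a combination of the $n_i^{-k}$ with coefficients $\prod_{j\ne i}\frac{n_j}{n_j-n_i}$. Summing over $n_1<\cdots<n_r$ — with a cutoff $n_r\le N$ and a passage to the limit to license the rearrangement, since the pieces do not converge on their own — the sum telescopes down to $\zeta(k)$.

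Granting this, the second identity follows combinatorially. The key input is the expansion
\[
 \zeta^\star(k_1,\dots,k_r)=\sum\zeta(j_1,\dots,j_s),
\]
where the sum runs over all $(j_1,\dots,j_s)$ obtained from $(k_1,\dots,k_r)$ by replacing some (possibly none) of the $r-1$ separating commas by plus signs; this is immediate from splitting the domain $n_1\le\cdots\le n_r$ according to which of the inequalities are equalities. I would substitute this into the left-hand side, interchange the order of summation, and collect the coefficient of each $\zeta(j_1,\dots,j_s)$ with $j_1+\cdots+j_s=k$. Since $j_s\ge k_r\ge2$ for every term that occurs, all the resulting MZVs converge.

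The coefficient of $\zeta(j_1,\dots,j_s)$ counts pairs consisting of a composition $k_1+\cdots+k_r=k$ with $k_r\ge2$ and a grouping of its parts into $s$ consecutive blocks with sums $j_1,\dots,j_s$. Once the block sizes $m_i\ge1$ (with $m_1+\cdots+m_s=r$) are fixed, the block for $j_i$ can be chosen in $\binom{j_i-1}{m_i-1}$ ways for $i<s$, whereas the last block contributes $\binom{j_s-2}{m_s-1}$ because its final part is $k_r\ge2$; by the Vandermonde convolution (the upper indices summing to $k-s-1$) this coefficient equals $\binom{k-s-1}{r-s}$, independently of the particular composition. Applying the MZV sum formula in each depth $s$ (legitimate since $1\le s\le r\le k-1$), the left-hand side becomes $\bigl(\sum_{s=1}^{r}\binom{k-s-1}{r-s}\bigr)\zeta(k)$, and the hockey-stick identity $\sum_{s=1}^{r}\binom{k-s-1}{r-s}=\binom{k-1}{r-1}$ gives the claim.

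The main obstacle is the first identity: its partial-fraction proof is not purely formal, because the series produced after the partial-fraction step do not converge term by term, and controlling this requires the cutoff-and-limit device that is the delicate point in the classical arguments. In the reduction of the star formula to it, the only subtlety is bookkeeping the condition $k_r\ge2$, which is exactly what replaces $\binom{j_s-1}{m_s-1}$ by $\binom{j_s-2}{m_s-1}$ for the last block; the rest is routine binomial manipulation.
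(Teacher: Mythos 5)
Your argument is correct, but it follows a genuinely different route from the paper's. The paper treats Proposition \ref{sum} as a known classical result (citing Granville and Zagier for the first formula) and then, in Section 3, derives only the second formula as a corollary of its Main Theorem \ref{ohnoS}: substituting $\boldsymbol{k}=(k)$ there, the left-hand side collapses to $\binom{k+m-1}{m}\zeta(k+m)$ while the right-hand side becomes the full sum of $\zeta^\star$ over admissible indices of weight $k+m$ and depth $m+1$, which gives the star sum formula after renaming parameters. You instead reduce the star formula directly to the ordinary sum formula: you expand $\zeta^\star(k_1,\dots,k_r)$ as a sum of $\zeta(j_1,\dots,j_s)$ over contractions, count the refinements of a fixed $(j_1,\dots,j_s)$ with last part at least $2$ via $\binom{j_1-1}{m_1-1}\cdots\binom{j_{s-1}-1}{m_{s-1}-1}\binom{j_s-2}{m_s-1}$, apply Vandermonde to get the composition-independent coefficient $\binom{k-s-1}{r-s}$, and finish with the hockey-stick identity; I checked these steps and they are sound, and your observation that $j_s\ge k_r\ge2$ keeps every intermediate MZV convergent. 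The trade-off is clear: your proof is elementary and self-contained modulo the classical depth-$s$ sum formula (for which you give only the standard partial-fraction sketch, with the convergence caveat correctly flagged), whereas the paper's derivation is a one-line specialization whose purpose is to exhibit Theorem \ref{ohnoS} as a genuine generalization of the sum formula rather than to give an independent proof.
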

An index is a (possibly empty) sequence of positive integers.
An index is said to be admissible if either it is empty or its last component is greater than $1$.
\begin{defn}
If we write an admissible index $\boldsymbol{k}$ as
 \[
  \boldsymbol{k}=(\{1\}^{a_1-1},b_1+1,\dots,\{1\}^{a_s-1},b_s+1) \quad (a_p, b_q\ge1),
 \]
we define the dual index of $\boldsymbol{k}$ as 
 \[
  \boldsymbol{k}^\dagger =(\{1\}^{b_s-1},a_s+1,\dots,\{1\}^{b_1-1},a_1+1).
 \]
\end{defn}
\begin{prop}[Duality formula] \label{duality}
 For an admissible index $\boldsymbol{k}$,  we have
 \[ \zeta(\boldsymbol{k}^\dagger)=\zeta(\boldsymbol{k}). \]
\end{prop}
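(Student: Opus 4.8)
The plan is to realise $\zeta(\boldsymbol{k})$ as an iterated integral and apply the substitution $t\mapsto 1-t$. Put $\omega_0=dt/t$ and $\omega_1=dt/(1-t)$, and to an index $\boldsymbol{k}=(k_1,\dots,k_r)$ attach the word of $1$-forms
\[
 w(\boldsymbol{k})=\omega_1\,\omega_0^{k_1-1}\,\omega_1\,\omega_0^{k_2-1}\cdots\omega_1\,\omega_0^{k_r-1},
\]
of length $k:=\wt(\boldsymbol{k})$. First I would recall the standard representation
\[
 \zeta(\boldsymbol{k})=\int_{0<t_1<\cdots<t_k<1}\eta_1(t_1)\,\eta_2(t_2)\cdots\eta_k(t_k),
\]
where $\eta_1\eta_2\cdots\eta_k$ is the list of forms in $w(\boldsymbol{k})$ read from left to right. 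This is the one genuinely analytic ingredient: $w(\boldsymbol{k})$ always begins with $\omega_1$, so the innermost integral converges at $t=0$, and admissibility ($k_r\ge2$) is exactly the condition that $w(\boldsymbol{k})$ ends in $\omega_0$, which is convergence at $t=1$; expanding each $\omega_1$ as a geometric series and integrating termwise returns the defining multiple series. I would prove this by induction on $\wt(\boldsymbol{k})$, or simply cite it.

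Next I would pass to block form. If $\boldsymbol{k}=(\{1\}^{a_1-1},b_1+1,\dots,\{1\}^{a_s-1},b_s+1)$, then a component equal to $1$ contributes a single $\omega_1$ and a component $b_p+1$ contributes $\omega_1\,\omega_0^{b_p}$, so
\[
 w(\boldsymbol{k})=\omega_1^{a_1}\omega_0^{b_1}\,\omega_1^{a_2}\omega_0^{b_2}\cdots\omega_1^{a_s}\omega_0^{b_s}.
\]
The crux is the change of variables $(t_1,\dots,t_k)\mapsto(1-t_k,\dots,1-t_1)$, a bijection of the simplex onto itself with $\lvert\text{Jacobian}\rvert=1$: it reverses the order of integration and interchanges $\omega_0\leftrightarrow\omega_1$, the sign attached to each form being absorbed by the reversal, so no overall constant appears. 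Hence $\zeta(\boldsymbol{k})$ equals the integral attached to the word gotten from $w(\boldsymbol{k})$ by writing the letters in reverse order and swapping $\omega_0\leftrightarrow\omega_1$, namely
\[
 \omega_1^{b_s}\omega_0^{a_s}\,\omega_1^{b_{s-1}}\omega_0^{a_{s-1}}\cdots\omega_1^{b_1}\omega_0^{a_1}.
\]

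Finally I would observe that this last word is precisely $w(\boldsymbol{k}^\dagger)$ in block form for $\boldsymbol{k}^\dagger=(\{1\}^{b_s-1},a_s+1,\dots,\{1\}^{b_1-1},a_1+1)$, and that $\boldsymbol{k}^\dagger$ is admissible (its last component is $a_1+1\ge2$), so its own integral representation is valid; therefore the transformed integral is literally $\zeta(\boldsymbol{k}^\dagger)$, giving $\zeta(\boldsymbol{k})=\zeta(\boldsymbol{k}^\dagger)$. The empty index is handled separately and trivially, since $\emptyset^\dagger=\emptyset$ and $\zeta(\emptyset)=1$. I expect the only real friction to be bookkeeping rather than mathematics: one must fix the integral–word dictionary and the definition of $\dagger$ with matching conventions so that ``reverse the word and interchange the two forms'' coincides on the nose with the $\dagger$ operation of the definition. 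Once the conventions are pinned down, the argument is the single substitution $t\mapsto1-t$.
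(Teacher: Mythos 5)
Your argument is correct: it is the standard proof of duality via the iterated integral representation and the involution $t\mapsto1-t$, and your dictionary between indices and words is consistent with the paper's conventions ($n_1<\cdots<n_r$ with $k_r\ge2$, so the word begins with $\omega_1$ and ends with $\omega_0$, and reversal plus the swap $\omega_0\leftrightarrow\omega_1$ realizes $\boldsymbol{k}\mapsto\boldsymbol{k}^\dagger$ exactly as defined). Note that the paper states Proposition \ref{duality} as a known classical result without proof, so there is no internal argument to compare against; yours is the standard one.
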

Ohno's relation (Theorem \ref{ohno}) is a generalization of Propositions \ref{sum} and \ref{duality}. 
For a sequence $\boldsymbol{k}=(k_1,\ldots,k_r)$, let $\wt(\boldsymbol{k})=k_1+\cdots+k_r$, called its weight, and $\dep(\boldsymbol{k})=r$, called its depth. 
When two sequences $\boldsymbol{k}$ and $\boldsymbol{l}$ have the same depth, we denote by $\boldsymbol{k} \oplus \boldsymbol{l}$ the componentwise sum of $\boldsymbol{k}$ and $\boldsymbol{l}$.

In the following theorem and afterwards, we will always assume that $\boldsymbol{e}$ runs over sequences of nonnegative integers.
\begin{thm}[Ohno's relation; Ohno \cite{ohno_99}] \label{ohno}
 For an admissible index $\boldsymbol{k}$ and $m\in\Z_{\ge 0}$, we have
 \begin{align*}
  \sum_{\substack{ \wt (\boldsymbol{e})=m \\ \dep (\boldsymbol{e})=\dep (\boldsymbol{k}) }}
  \zeta (\boldsymbol{k}\oplus\boldsymbol{e}) 
  =\sum_{\substack{ \wt (\boldsymbol{e})=m \\ \dep (\boldsymbol{e})=\dep(\boldsymbol{k}^\dagger)}}
  \zeta (\boldsymbol{k}^\dagger \oplus\boldsymbol{e}).
 \end{align*}
\end{thm}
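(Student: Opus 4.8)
The plan is to reduce the theorem to a single identity of generating functions and then attack that identity through the duality of iterated integrals. Writing $r=\dep(\boldsymbol{k})$ and introducing a formal variable $u$, set
\[
 \Phi(\boldsymbol{k};u):=\sum_{m\ge0}u^m\sum_{\substack{\wt(\boldsymbol{e})=m\\ \dep(\boldsymbol{e})=r}}\zeta(\boldsymbol{k}\oplus\boldsymbol{e})\ \in\ \R[[u]],
\]
which is well defined because $\boldsymbol{k}\oplus\boldsymbol{e}$ is again admissible for every $\boldsymbol{e}$. It suffices to prove $\Phi(\boldsymbol{k};u)=\Phi(\boldsymbol{k}^\dagger;u)$ and then to compare coefficients of $u^m$. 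Throughout I would use the iterated-integral representation $\zeta(\boldsymbol{k})=\int_{0<t_1<\dots<t_W<1}\omega_{\epsilon_1}(t_1)\cdots\omega_{\epsilon_W}(t_W)$, with $W=\wt(\boldsymbol{k})$, $\omega_0=dt/t$, $\omega_1=dt/(1-t)$, and $(\epsilon_1,\dots,\epsilon_W)$ the word of $\boldsymbol{k}$, together with the fact that $\boldsymbol{k}\mapsto\boldsymbol{k}^\dagger$ corresponds to reversing this word and interchanging $\omega_0\leftrightarrow\omega_1$, which on the integral is the substitution $t_j\mapsto1-t_{W+1-j}$ (the mechanism behind Proposition~\ref{duality}).

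The first step is to realise $\Phi(\boldsymbol{k};u)$ as a single integral over the same simplex. Replacing $\boldsymbol{k}$ by $\boldsymbol{k}\oplus\boldsymbol{e}$ inserts $e_i$ extra copies of $\omega_0$ into the block of $\omega_0$'s attached to the entry $k_i$; placing the new variables at the inner end of their block, integrating them out, and summing over $\boldsymbol{e}$ with the weight $u^{\wt(\boldsymbol{e})}$ collapses each insertion into an elementary factor, since $\sum_{e\ge0}\frac{u^e}{e!}(\log t-\log t')^e=(t/t')^u$. The outcome is
\[
 \Phi(\boldsymbol{k};u)=\int_{0<t_1<\dots<t_W<1}\eta_1(t_1)\cdots\eta_W(t_W),
\]
where $\eta_j$ is $\omega_{\epsilon_j}$ twisted by a power of $t$: each $\omega_1$-variable acquires a factor $t^{-u}$ and the variable immediately outside it a compensating factor $t^{u}$, these cancelling wherever two $\omega_1$-variables are adjacent; in particular $\eta_j=\omega_{\epsilon_j}$ at $u=0$. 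This formula is checked directly by expanding the twisted forms geometrically and recognising the resulting multiple series as $\sum_{\boldsymbol{e}}u^{\wt(\boldsymbol{e})}\zeta(\boldsymbol{k}\oplus\boldsymbol{e})$.

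It remains to feed this into the substitution $t_j\mapsto1-t_{W+1-j}$, and this is the step I expect to be the main obstacle. By the reversal formula for iterated integrals the bare product $\omega_{\epsilon_1}\cdots\omega_{\epsilon_W}$ is carried, with no leftover signs, to the product built from the reversed-and-flipped word, i.e.\ the word of $\boldsymbol{k}^\dagger$ — this part is formal and is exactly Proposition~\ref{duality}. The twist factors, however, do \emph{not} match up term by term: there are $\dep(\boldsymbol{k})$ of them for $\boldsymbol{k}$ and $\dep(\boldsymbol{k}^\dagger)$ for $\boldsymbol{k}^\dagger$, and after $t\mapsto1-t$ they have been redistributed onto different variables. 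One therefore has to prove directly that the twisted integral attached to $\boldsymbol{k}$ equals the one attached to $\boldsymbol{k}^\dagger$; I would try to do this either by not fully collapsing the inserted variables — keeping enough of them to produce a symmetric intermediate integral whose invariance under $t\mapsto1-t$ \emph{is} visible term by term — or by a careful combinatorial bookkeeping of which $\omega_1$'s and $\omega_0$'s each twist sits between, tracking in particular the cancellations that occur at entries of $\boldsymbol{k}$ equal to $1$. Once $\Phi(\boldsymbol{k};u)=\Phi(\boldsymbol{k}^\dagger;u)$ is established, comparing coefficients of $u^m$ gives the theorem. (A route that avoids the integral symmetry altogether is the connected-sum method: introduce a double sum $Z(\boldsymbol{a};\boldsymbol{b})$ with a beta-type connector joining the largest index on each side, verify $Z(\boldsymbol{k};\emptyset)=\zeta(\boldsymbol{k})$, and prove a transport lemma moving one increment from one side to the other; summing the transports over all $\boldsymbol{e}$ with $\wt(\boldsymbol{e})=m$ then yields the identity directly.)
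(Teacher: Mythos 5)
The paper does not actually prove Theorem~\ref{ohno}: it is quoted as a known result of Ohno and used only as motivation, so there is no internal proof to compare against. Judged on its own, your proposal is a plan rather than a proof, and the gap sits exactly at the mathematical heart of the statement. The reduction to $\Phi(\boldsymbol{k};u)=\Phi(\boldsymbol{k}^\dagger;u)$ and the realisation of $\Phi$ as a twisted iterated integral (collapsing the inserted $\omega_0$-variables via $\sum_{e\ge0}u^e(\log t-\log t')^e/e!=(t/t')^u$) are both routine and correct; but these steps only repackage the assertion. You then observe, correctly, that the substitution $t_j\mapsto1-t_{W+1-j}$ does \emph{not} carry the twisted integrand for $\boldsymbol{k}$ to the twisted integrand for $\boldsymbol{k}^\dagger$ term by term --- the twists sit on $\dep(\boldsymbol{k})$ variables on one side and $\dep(\boldsymbol{k}^\dagger)$ on the other --- and at that point you write only that you ``would try'' two possible repairs. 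Neither is carried out, and neither is obviously going to work: the naive term-by-term matching genuinely fails (for height-one indices the symmetry of $\Phi$ is equivalent to the nontrivial evaluation $\sum_{a,b}\zeta(\{1\}^a,b+2)x^ay^b=1-\Gamma(1-x)\Gamma(1-y)/\Gamma(1-x-y)$, which no formal change of variables produces). So the entire content of Ohno's relation beyond the $m=0$ duality is left unproved.

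The parenthetical alternative via connected sums is in fact a viable route (it is essentially the Seki--Yamamoto proof), but as written it too is only a sketch: the definition of the connector, the boundary condition $Z(\boldsymbol{k};\emptyset)=\zeta(\boldsymbol{k})$, and above all the transport lemma are exactly where all the work lies, and you state them as tasks rather than establishing them. To turn the proposal into a proof you must either (i) exhibit the symmetric intermediate integral and verify its two degenerations, or (ii) fully set up the connected sum and prove the transport identity by the telescoping/partial-fraction computation. As it stands, every step you have actually completed is reversible bookkeeping, and the one irreversible step is missing.
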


Our first main theorem is an analogue of Ohno's relation for MZSVs.
\begin{thm}[Main theorem $1$] \label{ohnoS}
 For an admissible index $\boldsymbol{k}$ and $m\in\Z_{\ge 0}$, we have
 \[
  \sum_{\substack{ \wt(\boldsymbol{e})=m \\ \dep (\boldsymbol{e})=\dep(\boldsymbol{k}) }}
  c_1(\boldsymbol{k},\boldsymbol{e})\zeta^\star (\boldsymbol{k}\oplus\boldsymbol{e}) 
  =\sum_{\substack{ \wt(\boldsymbol{e})=m \\ \dep(\boldsymbol{e})=\dep(\boldsymbol{k}^\dagger ) }} \zeta^\star ((\boldsymbol{k}^\dagger\oplus\boldsymbol{e} )^\dagger),
 \]
 where
 \begin{align*}
  c_1((k_1\ldots,k_r),(e_1,\ldots,e_r))=\prod_{i=1}^{r}
  \binom{k_i+e_i+\delta_{i,1}-2 }{e_i},\,\, 
  \binom{n-1}{n}=
   \begin{cases}
    1 & \text{if $n=0$}; \\
    0 & \text{otherwise}.
   \end{cases}    
 \end{align*}
\end{thm}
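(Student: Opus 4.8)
The plan is to deduce Theorem \ref{ohnoS} from the classical Ohno relation (Theorem \ref{ohno}) together with the linear part of Kawashima's relation, which controls how star values decompose into ordinary ones. First I would set up the translation between MZSVs and MZVs: since $\zeta^\star(k_1,\dots,k_r) = \sum \zeta(\ldots)$ where the sum is over all ways of replacing some commas by plus signs, the quantity on the left-hand side of Theorem \ref{ohnoS}, namely $\sum_{\wt(\boldsymbol{e})=m} c_1(\boldsymbol{k},\boldsymbol{e})\,\zeta^\star(\boldsymbol{k}\oplus\boldsymbol{e})$, should be rewritten purely in terms of MZVs. The binomial weights $c_1$ are designed precisely so that, after expanding each $\zeta^\star$ and collecting terms, the combinatorial identity $\sum_j \binom{a}{j}\binom{b}{n-j} = \binom{a+b}{n}$ (Vandermonde) makes the coefficients telescope; the $\delta_{i,1}$ shift and the convention $\binom{n-1}{n}$ handle the boundary slot correctly. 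So step one is an algebraic identity expressing the left side as a sum of ordinary Ohno-type sums $\sum_{\wt(\boldsymbol{e}')=m'} \zeta(\boldsymbol{k}'\oplus\boldsymbol{e}')$ over certain "contracted" indices $\boldsymbol{k}'$ obtained from $\boldsymbol{k}$.

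Second, I would apply the classical Ohno relation (Theorem \ref{ohno}) termwise to each of those ordinary sums, turning $\sum \zeta(\boldsymbol{k}'\oplus\boldsymbol{e}')$ into $\sum \zeta((\boldsymbol{k}')^\dagger\oplus\boldsymbol{e}'')$. Third — and this is where I expect the real work to be — I would need to reassemble the resulting collection of dual-side ordinary Ohno sums into the single star-value sum $\sum_{\wt(\boldsymbol{e})=m}\zeta^\star((\boldsymbol{k}^\dagger\oplus\boldsymbol{e})^\dagger)$ appearing on the right of Theorem \ref{ohnoS}. The subtlety is that duality does not commute with the "insert plus signs" operation in an obvious way: the map $\boldsymbol{k}\mapsto\boldsymbol{k}^\dagger$ interacts with $\oplus\boldsymbol{e}$ and with contraction in a combinatorially intricate fashion, so matching the two sides requires a careful bookkeeping of how the dual index, the added sequence $\boldsymbol{e}$, and the contraction operation interleave. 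This is presumably the point at which the linear part of Kawashima's relation enters: rather than trying to match term by term through duality directly, I would use Kawashima's linear relation to express $\zeta^\star((\boldsymbol{k}^\dagger\oplus\boldsymbol{e})^\dagger)$ as a signed sum of ordinary MZVs in a form that manifestly lines up with the output of step two.

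Concretely, I would introduce a generating-function / Hoffman-algebra formulation: encode indices as words in two letters, write the operations $\oplus\boldsymbol{e}$ (summed over weight $m$) as a single operator (an "Ohno operator" of the shape $\sum_m \sigma_m t^m$), and express both the star-to-nonstar expansion and duality as explicit maps on the word algebra. The theorem then becomes the commutation of an Ohno-type operator with a composite of the duality antiautomorphism and the star-expansion map, and I would verify this commutation using the algebraic identities underlying Ohno's original proof (the connected-sum / differential-equation argument, or Ohno's generating-function identity) adapted to carry the $c_1$ weights. The main obstacle, to reiterate, is proving that single commutation identity: establishing that the weighted star-sum on the left, after dualizing via Kawashima's linear relation, produces exactly the weighted star-sum on the right with all $c_1$ coefficients accounted for — this is the combinatorial heart of the argument, and everything else (Vandermonde bookkeeping, termwise application of Theorem \ref{ohno}) is routine once that identity is in place.
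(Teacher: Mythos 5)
Your proposal does not contain a proof: the step you yourself identify as ``the combinatorial heart of the argument''---reassembling the dual-side ordinary Ohno sums into the single star-value sum $\sum_{\wt(\boldsymbol{e})=m}\zeta^\star((\boldsymbol{k}^\dagger\oplus\boldsymbol{e})^\dagger)$ with all $c_1$ coefficients accounted for---is exactly the content of the theorem, and you leave it as a commutation identity to be ``verified'' later by unspecified adaptations of Ohno's generating-function argument. There is no evidence offered that this identity follows from the classical Ohno relation plus Vandermonde bookkeeping; the paper does not use Theorem \ref{ohno} at any point, and it is not clear that Theorem \ref{ohnoS} is a consequence of Ohno's relation at all. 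You have also miscast the role of Kawashima's relation: expressing $\zeta^\star$ as a signed sum of ordinary MZVs is mere inclusion--exclusion and requires no theorem, whereas the linear part of Kawashima's relation is the family of \emph{vanishing} statements $\zeta^{\star,+}((\boldsymbol{k}\harb\boldsymbol{l})^\vee)=0$, which is used in the paper for a completely different purpose.

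The actual mechanism is this. One first proves the purely algebraic identity
\[
 \boldsymbol{k}\sha(\{1\}^m)=\sum_{i=0}^{m}\sum_{\substack{\wt(\boldsymbol{e})=m-i\\ \dep(\boldsymbol{e})=\dep(\boldsymbol{k})}}(\boldsymbol{k}\oplus\boldsymbol{e})\harb(\{1\}^{i}),
\]
then applies $\zeta^{\star,+}((\,\cdot\,)^\vee)$: Kawashima's relation with $\boldsymbol{l}=(\{1\}^{i})$ kills every term with $i\ge1$, leaving $\zeta^{\star,+}((\boldsymbol{k}\sha(\{1\}^m))^\vee)=\sum_{\boldsymbol{e}}\zeta^{\star,+}((\boldsymbol{k}\oplus\boldsymbol{e})^\vee)$. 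Separately, a direct computation with Hoffman duals shows $(\boldsymbol{k}^\vee\sha(\{1\}^m))^\vee=\sum_{\boldsymbol{e}}c_2(\boldsymbol{k},\boldsymbol{e})(\boldsymbol{k}\oplus\boldsymbol{e})$, which is where the binomial coefficients actually come from (as shuffle multiplicities of $1$'s into blocks of $1$'s, not from Vandermonde telescoping of a star-to-nonstar expansion). Writing $\boldsymbol{k}=P(\boldsymbol{l})$ and using $\boldsymbol{k}^\dagger=PR(\boldsymbol{l}^\vee)$ converts the $\vee$-statements into the $\dagger$-statement of the theorem. If you want to salvage your outline, you would need to actually prove your commutation identity from scratch; as it stands the proposal defers the entire difficulty.
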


\begin{rem}
 The second formula of Proposition \ref{sum} easily follows from Theorem~\ref{ohnoS}.
 For details, see Section 3.
\end{rem}

\subsection{Finite multiple zeta values}
Kaneko and Zagier \cite{kaneko_zagier_2017} introduced the finite multiple zeta(-star) values (FMZ(S)Vs) and
the symmetric multiple zeta(-star) values (SMZ(S)Vs). 
Set $\A=\prod_p\FF_p/\bigoplus_p\FF_p$, where $p$ runs over all primes.
For $k_1,\dots,k_r\in\Z_{\ge1}$, we define
\begin{align*}
 \zeta_{\A}(k_1,\dots,k_r)&=\Biggl(\sum_{1\le m_1<\dots<m_r<p}\frac{1}{m_1^{k_1}\dotsm m_r^{k_r}}\bmod p\Biggr)_p\in\A,\\
 \zeta_{\A}^{\star}(k_1,\dots,k_r)&=\Biggl(\sum_{1\le m_1\le\dots\le m_r<p}\frac{1}{m_1^{k_1}\dotsm m_r^{k_r}}\bmod p\Biggr)_p\in\A.
\end{align*}
Let $\ZZZ$ denote the $\Q$-linear subspace of $\R$ spanned by the multiple zeta values.
For $k_1,\dots,k_r\in\Z_{\ge1}$, we define
\begin{align*}
 \zeta_{\SSS}(k_1,\dots,k_r)&=\sum_{i=0}^{r}(-1)^{k_{i+1}+\dots+k_r}\zeta(k_1,\dots,k_i)
 \zeta(k_r,\dots,k_{i+1})\bmod\zeta(2)\in\ZZZ/\zeta(2)\ZZZ,\\
 \zeta_{\SSS}^{\star}(k_1,\dots,k_r)&=\sum_{j=0}^{r}(-1)^{k_{i+1}+\dots+k_r}\zeta^{\star}(k_1,\dots,k_i)\zeta^{\star}(k_r,\dots,k_{i+1})\bmod\zeta(2)\in\ZZZ/\zeta(2)\ZZZ,
\end{align*}
where we set $\zeta(\emptyset)=\zeta^{\star}(\emptyset)=1$.
The MZ(S)Vs that appear in the definition of the SMZ(S)Vs
are the regularized values if the last component is $1$;
although there are two ways of regularization, called the harmonic regularization and the shuffle regularization (see \cite{ihara_kaneko_zagier_2006}),
it is known that the SMZVs remain unchanged as elements of $\ZZZ/\zeta(2)\ZZZ$
no matter which regularization we use (see \cite{kaneko_zagier_2017}).

Kaneko and Zagier \cite{kaneko_zagier_2017} made a conjecture that the FMZVs and the SMZVs are isomorphic;
more precisely, if we let $\ZZZ_{\A}$ denote the $\Q$-linear subspace of $\A$ spanned by the FMZVs,
then $\ZZZ_{\A}$ and $\ZZZ/\zeta(2)\ZZZ$ are isomorphic as $\Q$-algebras via the correspondence
$\zeta_{\A}(k_1,\dots,k_r)\leftrightarrow\zeta_{\SSS}(k_1,\dots,k_r)$.
It means that $\zeta_{\A}(k_1,\dots,k_r)$ and $\zeta_{\SSS}(k_1,\dots,k_r)$ satisfy the same relations.
In what follows, we use the letter $\F$ when it can be replaced with either $\A$ or $\SSS$;
for example, by $\zeta_{\F}(1)=0$ we mean that both $\zeta_{\A}(1)=0$ and $\zeta_{\SSS}(1)=0$ are true.
We write
\[
 \ZZ_{\F}(k)=
 \begin{cases}
  (B_{p-k}/k\bmod p)_p&\text{if $\F=\A$;}\\
  \zeta(k)\bmod\zeta(2)&\text{if $\F=\SSS$}
 \end{cases}
\]
for $k\in\Z_{\ge2}$, where $B_n$ denotes the $n$-th Bernoulli number.
Note that it can be verified rather easily that $\zeta_{\F}(1,k-1)=\ZZ_{\F}(k)$ for $k\in\Z_{\ge2}$,
so that $(B_{p-k}/k\bmod p)_p$ corresponds to $\zeta(k)\bmod\zeta(2)$ via the above-mentioned isomorphism $\ZZZ_{\A}\cong\ZZZ/\zeta(2)\ZZZ$.

The FMZ(S)Vs and the SMZ(S)Vs are known to satisfy the sum formula (Proposition \ref{sumFS}) and the duality formula (Proposition \ref{dualFS}).
\begin{prop}[Sum formula; Saito-Wakabayashi \cite{saito_wakabayashi_2015}, Murahara \cite{murahara_2015}] \label{sumFS}
 For $k,r,i\in\Z$ with $1\le i\le r\le k-1$, we have
 \begin{align*}
  \sum_{\substack{k_1+\dots+k_r=k\\k_i\ge2,k_j\ge1(1\le j\le r)}}\zeta_{\F} (k_1,\dots,k_r)
  =(-1)^{i}\biggl(\binom{k-1}{i-1}+(-1)^r\binom{k-1}{r-i}\biggr)\ZZ_{\F}(k), \\
  \sum_{\substack{k_1+\dots+k_r=k\\k_i\ge2,k_j\ge1(1\le j\le r)}}\zeta_{\F}^{\star}(k_1,\dots,k_r)
  =(-1)^{i}\biggl((-1)^r\binom{k-1}{i-1}+\binom{k-1}{r-i}\biggr)\ZZ_{\F}(k).
 \end{align*}
\end{prop}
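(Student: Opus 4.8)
The plan is to reduce both identities to one generating-function computation and then to evaluate the resulting single sums by elementary means, arguing for $\F=\A$ and $\F=\SSS$ in parallel (with the congruences modulo $p$ in the first case replaced by classical identities modulo $\zeta(2)$ in the second). I would prove the first (non-star) formula; the star version is entirely analogous, using $\sum_{n\ge 1}X^n$ in place of $\sum_{n\ge 2}X^n$ in each slot and accounting for $\le$ rather than $<$ in the definition of $\zeta_\F^\star$. First rewrite the constraint on the $i$-th part via $\sum_{n\ge 2}X^n=\sum_{n\ge 1}X^n-X$, so that the left-hand side equals
\[
  \sum_{\substack{k_1+\dots+k_r=k\\k_j\ge 1}}\zeta_\F(k_1,\dots,k_r)
  \;-\;\sum_{\substack{k_1+\dots+k_r=k\\k_i=1,\ k_j\ge 1\,(j\ne i)}}\zeta_\F(k_1,\dots,k_{i-1},1,k_{i+1},\dots,k_r).
\]
The first sum vanishes: for $\F=\A$, summing the geometric series $\sum_{k_j\ge 1}(t/m_j)^{k_j}$ turns it into the coefficient of $t^{k-r}$ in $\sum_{0<m_1<\dots<m_r<p}\prod_j(m_j-t)^{-1}=e_r\bigl(\{(m-t)^{-1}\}_{m=1}^{p-1}\bigr)\equiv t^{p-1-r}/(t^{p-1}-1)\pmod p$ (the congruence coming from $\prod_{m=1}^{p-1}(x-m)\equiv x^{p-1}-1$), and this coefficient is $0$ for all but finitely many $p$; for $\F=\SSS$ the corresponding shuffle-regularized statement gives the same conclusion. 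So everything reduces to evaluating the second sum — a sum formula with a forced $1$ in slot $i$.

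For that sum I would pass to generating functions again. For $\F=\A$ it is, in each $p$-component, the coefficient of $t^{k-r-1}$ in $\sum_{0<m_1<\dots<m_r<p}m_i^{-1}\prod_{j\ne i}(m_j-t)^{-1}$. Splitting the sum at $m_i=m$ into the initial block $m_1<\dots<m_{i-1}<m$ and the final block $m<m_{i+1}<\dots<m_r$, and rewriting each block via $e_a(\{x^{-1}\})=e_{n-a}(\{x\})/e_n(\{x\})$, the product of the two block denominators is $\prod_{n=1}^{p-1}(n-t)/(m-t)\equiv(t^{p-1}-1)/(m-t)$, and the whole quantity becomes $\bigl[t^{k-r-1}X^{i-1}Y^{r-i}\bigr]\,\frac1{t^{p-1}-1}\sum_{m=1}^{p-1}\frac1m\prod_{l=1}^{m-1}(X+l-t)\prod_{q=m+1}^{p-1}(Y+q-t)$. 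The unweighted sum telescopes to $\dfrac{(X-t)^{p-1}-(Y-t)^{p-1}}{X-Y}$ (using $\prod_{l=1}^{p-1}(u+l)\equiv u^{p-1}-1$); its $\frac1m$-weighted analogue does not close up so cleanly, and it is exactly there that a von Staudt--Clausen / Kummer type congruence enters and produces the factor $\ZZ_\A(k)=(B_{p-k}/k)_p$. Expanding the rational functions and using $\binom{p-1}{j}\equiv(-1)^j$ together with Vandermonde convolutions of the shape $\sum_{\alpha+\beta=k-2}\binom{\alpha}{i-1}\binom{\beta}{r-i}$, the coefficient $\binom{k-1}{i-1}$ falls out of the initial block and $\binom{k-1}{r-i}$ out of the final block, the relative sign $(-1)^r$ being tracked through the substitution $m_j\mapsto p-m_j$, and the overall $(-1)^i$ coming from the $-1$ introduced above and the powers of $-t$.

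For $\F=\SSS$ I would run the same extraction with the shuffle-regularized multiple zeta values (in their iterated-integral presentation) in place of the truncated sums, and reduce modulo $\zeta(2)$: the role played above by the mod-$p$ congruences $\binom{p-1}{j}\equiv(-1)^j$, $\sum_m m^{-j}\equiv 0$ ($j\not\equiv 0\bmod p-1$) and the Bernoulli congruence is now played by the (regularized) sum formula of Proposition~\ref{sum} — equivalently a special case of Ohno's relation, Theorem~\ref{ohno} — applied to the two blocks, with $\ZZ_\SSS(k)=\zeta(k)\bmod\zeta(2)$ arising as the surviving value. This is self-contained: we do not invoke the Kaneko--Zagier conjecture to transport the $\F=\A$ case to the $\F=\SSS$ case. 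The same binomial bookkeeping then gives $(-1)^i\bigl(\binom{k-1}{i-1}+(-1)^r\binom{k-1}{r-i}\bigr)\zeta(k)\bmod\zeta(2)$.

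The step I expect to be the main obstacle is the interior position of the distinguished slot. For classical multiple zeta values, admissibility forces the $2$ into the last slot, the generating function factors over the single run $1\le m\le p-1$, and one congruence finishes the job; here the extra factor $m_i^{-1}$ sits in the middle, so the relevant products run over the proper segments $\{1,\dots,m-1\}$ and $\{m+1,\dots,p-1\}$, for which $\prod(x-m)\equiv x^{p-1}-1$ fails. Reassembling these partial products — through the telescoping identity above and its $\frac1m$-weighted refinement — and then carrying out the coefficient extraction so that it lands exactly on $(-1)^i\bigl(\binom{k-1}{i-1}+(-1)^r\binom{k-1}{r-i}\bigr)$ rather than on some other combination of binomials is the technical heart; on the symmetric side one must additionally check that the outcome is insensitive to the choice between the two standard regularizations, so that it is well defined modulo $\zeta(2)$.
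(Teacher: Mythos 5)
Your outline correctly isolates where the difficulty lies, but it does not overcome it, so there is a genuine gap at the technical heart of the argument. The reduction of the left\nobreakdash-hand side to $-\sum_{k_i=1}\zeta_{\F}(k_1,\dots,k_r)$ is fine, and the vanishing of the unconstrained sum via $e_r(\{(m-t)^{-1}\})\equiv t^{p-1-r}/(t^{p-1}-1)\pmod p$ is correct. But the entire content of the proposition is then concentrated in the evaluation of the weighted sum
\[
 \sum_{m=1}^{p-1}\frac{1}{m}\prod_{l=1}^{m-1}(X+l-t)\prod_{q=m+1}^{p-1}(Y+q-t),
\]
and at precisely this point you assert that a von Staudt--Clausen / Kummer type congruence enters and produces the factor $\ZZ_{\A}(k)$, without exhibiting the congruence, the coefficient extraction, or the bookkeeping that is supposed to land exactly on $(-1)^{i}\bigl(\binom{k-1}{i-1}+(-1)^r\binom{k-1}{r-i}\bigr)$. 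The unweighted telescoping identity you state is correct (since $(X-Y)A_mB_m=A_{m+1}B_m-A_mB_{m-1}$), but it does not extend to the $\frac1m$-weighted sum, and no substitute identity is supplied; as written this is a plan for a proof rather than a proof. The $\F=\SSS$ half is in worse shape: replacing the mod-$p$ congruences by an appeal to the regularized sum formula applied to the two blocks is not a computation, and the necessary handling of the harmonic/shuffle regularized values inside the defining expression $\sum_{i}(-1)^{k_{i+1}+\cdots+k_r}\zeta(k_1,\dots,k_i)\zeta(k_r,\dots,k_{i+1})$, including the independence of the choice of regularization modulo $\zeta(2)$, is never carried out.

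For comparison, the paper does not attempt a direct proof of this proposition at all: it is quoted as a known theorem of Saito--Wakabayashi and Murahara, and the only argument the paper supplies concerns the second (star) formula, which is deduced in Proposition \ref{ohnoFS_to_sumFS} from Theorem \ref{ohnoFS} by substituting $\boldsymbol{k}=(i,l-i)$, using the depth-two evaluation $\zeta_{\F}^{\star}(k_1,k_2)=-(-1)^{k_1}\binom{k_1+k_2}{k_1}\ZZ_{\F}(k_1+k_2)$ together with the elementary binomial identities of Lemma \ref{lem2}. A self-contained argument along your lines would amount to reproving the Saito--Wakabayashi theorem from scratch, and the missing Bernoulli-number computation would have to be supplied in full before the proposal could be accepted.
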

\begin{defn}
If we write a nonempty index $\boldsymbol{k}$ as
 \[
  \boldsymbol{k}=(a_0,\{1\}^{b_1-1},a_1+1,\{1\}^{b_2-1},a_2+1,\ldots,\{1\}^{b_{s-1}-1},a_{s-1}+1,\{1\}^{b_s-1},a_s) \quad (a_p, b_q\ge1),
 \]
we define Hoffman's dual index of $\boldsymbol{k}$ as 
 \begin{align*}
  \boldsymbol{k}^\vee =(\{1\}^{a_0-1},b_1+1,\{1\}^{a_{1}-1},\ldots,b_s+1,\{1\}^{a_s-1}).
 \end{align*}
\end{defn}
\begin{prop}[Duality formula; Hoffman \cite{hoffman_2015}, Jarossay \cite{jarossay_2014}] \label{dualFS}
 For a nonempty index $\boldsymbol{k}$, we have
 \[ \zeta_{\mathcal{F}}^\star (\boldsymbol{k})=-\zeta_{\mathcal{F}}^\star (\boldsymbol{k}^{\vee}). \]
\end{prop}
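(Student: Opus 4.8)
The plan is to treat the two cases $\F=\A$ and $\F=\SSS$ uniformly, by exhibiting $\zeta_\F^\star(\boldsymbol{k})$ and $-\zeta_\F^\star(\boldsymbol{k}^\vee)$ as the two endpoints of a single ``connected sum'', in the spirit of Seki--Yamamoto's proof of the classical duality, and then transporting one endpoint into the other one block at a time. First I would fix the concrete models: for $\F=\A$, work modulo each prime $p$ and identify $\zeta_\A^\star(\boldsymbol{k})$ with $\sum_{0<m_1\le\dots\le m_r<p}m_1^{-k_1}\cdots m_r^{-k_r}\in\FF_p$; for $\F=\SSS$, use the defining expression in terms of regularized classical multiple zeta-star values, which is known to be independent of the choice of regularization, so that one may fix the shuffle regularization throughout. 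I would also record three elementary ingredients, to be used freely below: (a) the star-to-non-star expansion $\zeta_\F^\star(\boldsymbol{k})=\sum_{\boldsymbol{k}'}\zeta_\F(\boldsymbol{k}')$, the sum being over all $\boldsymbol{k}'$ obtained from $\boldsymbol{k}$ by changing some of the commas into plus signs (for $\F=\A$ this is the splitting of each ``$\le$'' into ``$<$'' or ``$=$'', and for $\F=\SSS$ it is inherited from the same splitting for the regularized classical $\zeta^\star$); (b) the reversal relation $\zeta_\F(k_1,\dots,k_r)=(-1)^{k_1+\dots+k_r}\zeta_\F(k_r,\dots,k_1)$ (for $\F=\A$, substitute $m_i\mapsto p-m_{r+1-i}$); and (c) $\zeta_\F(k)=0$ for every $k\ge 1$, which is the $r=1$ case of Proposition~\ref{sumFS} for $k\ge 2$ and is recorded in the introduction for $k=1$. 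Finally, encoding an index of weight $n$ by its set of partial sums inside $\{1,\dots,n-1\}$, one sees directly from the definition of $\boldsymbol{k}^\vee$ that $\boldsymbol{k}\mapsto\boldsymbol{k}^\vee$ is exactly complementation of that set.

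The core of the argument is the connected sum. For pairs of indices $(\boldsymbol{a};\boldsymbol{b})$ I would define a double harmonic sum $Z(\boldsymbol{a};\boldsymbol{b})$ in which the largest summation variable on the $\boldsymbol{a}$-side and the largest one on the $\boldsymbol{b}$-side are tied together by a ``connector'' $C(m,n)$, normalized so that $Z(\boldsymbol{k};\emptyset)=\zeta_\F^\star(\boldsymbol{k})$ and $Z(\emptyset;\boldsymbol{k}^\vee)=-\zeta_\F^\star(\boldsymbol{k}^\vee)$. One then needs two ``transport'' lemmas for $C$: one that removes a part equal to $1$ from the right-hand end of the $\boldsymbol{a}$-index and reinstates it as a new innermost summation variable on the $\boldsymbol{b}$-side, and one that performs the corresponding operation for a part $\ge 2$ (turning it into a block of $1$'s on the other side). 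These two moves are precisely the local steps which, iterated, carry the pair $(\boldsymbol{k};\emptyset)$ to $(\emptyset;\boldsymbol{k}^\vee)$: on the level of summation variables they reproduce the ``$1\leftrightarrow$ comma, $(b+1)\leftrightarrow$ block of $1$'s'' bookkeeping that governs both the definition of $\boldsymbol{k}^\vee$ and the complementation of partial-sum sets. Granting the two transport lemmas, the theorem follows by a short induction on the depth of $\boldsymbol{k}$ (equivalently, on the number of transport steps).

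The main obstacle is to pin down the connector $C(m,n)$ together with the correct global sign, and then to check that the telescoping boundary terms produced by the transport lemmas actually vanish. In the classical case $C(m,n)=m!\,n!/(m+n)!$; here one wants its counterpart modulo $p$ (respectively, its shuffle-regularized counterpart), and one must be careful to keep every summation variable below $p$, since it is exactly this range restriction --- together with ingredients (b) and (c) and the vanishing modulo $p$ of the low-degree symmetric functions of $1/1,\dots,1/(p-1)$ --- that makes the unwanted boundary contributions disappear. For context I note that the result is originally due to Hoffman, whose proof for $\F=\A$ runs instead through the Hopf algebra of quasi-symmetric functions and the same vanishing phenomenon, and to Jarossay; the connected-sum route sketched above has the advantage of treating $\F=\A$ and $\F=\SSS$ simultaneously and of making the role of the finite sum formula explicit.
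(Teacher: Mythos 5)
The paper does not actually prove Proposition~\ref{dualFS}: it is quoted from Hoffman \cite{hoffman_2015} (for $\F=\A$) and Jarossay \cite{jarossay_2014} (for $\F=\SSS$) and used as an input to Theorem~\ref{ohnoFS}. So there is no internal proof to compare yours against; your attempt must be judged on its own.

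As written, your proposal is a plan rather than a proof, and the gap is exactly where you locate it yourself. The connector $C(m,n)$ is never defined, the two transport lemmas are never stated (let alone proved), and the vanishing of the boundary terms --- which is the entire analytic/arithmetic content of the argument, resting on the range restriction $m<p$ and on cancellations mod $p$ --- is only asserted to be the ``main obstacle.'' Every ingredient you do pin down ((a) the star-to-non-star expansion, (b) reversal, (c) vanishing in depth one, the complementation description of $\boldsymbol{k}\mapsto\boldsymbol{k}^\vee$) is correct but elementary; none of it gets you past the point where the actual work begins. Until a specific $C(m,n)$ is exhibited and the two transport identities are verified for it, nothing has been proved. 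Moreover, the claimed uniform treatment of $\F=\A$ and $\F=\SSS$ is not substantiated and is doubtful as described: $\zeta_{\SSS}^\star(\boldsymbol{k})$ is by definition a sum $\sum_{i}(-1)^{k_{i+1}+\dots+k_r}\zeta^\star(k_1,\dots,k_i)\zeta^\star(k_r,\dots,k_{i+1})$ of products of regularized values, not a single harmonic sum with a ``largest summation variable,'' so the connected-sum scaffolding (tying the top variables of the two sides together by $C(m,n)$) does not transfer verbatim; the $\SSS$ case would need a genuinely different realization of $Z(\boldsymbol{a};\boldsymbol{b})$, which you do not indicate. A complete proof along your lines exists in the literature for $\F=\A$ (Seki--Yamamoto's connector method adapts to Hoffman's duality), but you would need to supply the connector, prove both transport lemmas in $\FF_p$, and then give a separate argument --- or a genuinely new connected sum --- for $\F=\SSS$.
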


The following Ohno type relation for FMZVs and SMZVs was conjectured by Kaneko \cite{kaneko_2015} and established by Oyama \cite{oyama_2015}.
\begin{thm}[Oyama \cite{oyama_2015}] \label{ohnoF}
 For a nonempty index $\boldsymbol{k}$ and $m\in\Z_{\ge 0}$, we have
 \[ 
  \sum_{\substack{ \wt(\boldsymbol{e})=m \\ \dep(\boldsymbol{e})=\dep(\boldsymbol{k}) }}
  \zeta_{\mathcal{F}} (\boldsymbol{k}\oplus\boldsymbol{e})
  =\sum_{\substack{ \wt(\boldsymbol{e})=m \\ \dep(\boldsymbol{e})=\dep(\boldsymbol{k}^\vee ) }}
  \zeta_{\mathcal{F}} ((\boldsymbol{k}^\vee \oplus\boldsymbol{e})^\vee ).
 \]
\end{thm}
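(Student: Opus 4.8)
The plan is to lift the statement to the noncommutative polynomial algebra $\mathfrak{h}=\Q\langle x,y\rangle$ and its subspace $\mathfrak{h}^{1}=\Q+\mathfrak{h}y$, identifying an index $\boldsymbol{k}=(k_{1},\dots,k_{r})$ with the word $z_{\boldsymbol{k}}=z_{k_{1}}\cdots z_{k_{r}}$, where $z_{k}=x^{k-1}y$; then $\zeta_{\F}$ extends to a $\Q$-linear map on $\mathfrak{h}^{1}$. For a formal parameter $t$ let $\sigma_{t}$ be the concatenation-algebra endomorphism of $\mathfrak{h}^{1}[[t]]$ determined by $z_{k}\mapsto x^{k-1}(1-tx)^{-1}y$. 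Expanding the geometric series shows that $\sigma_{t}(z_{\boldsymbol{k}})=\sum_{m\ge0}t^{m}\sum_{\wt(\boldsymbol{e})=m,\,\dep(\boldsymbol{e})=\dep(\boldsymbol{k})}z_{\boldsymbol{k}\oplus\boldsymbol{e}}$, so $\zeta_{\F}(\sigma_{t}(z_{\boldsymbol{k}}))$ is the generating function of the left-hand sides in Theorem~\ref{ohnoF}. Writing $\phi$ for the involutive $\Q$-linear map on $\mathfrak{h}^{1}$ realizing Hoffman's dual ($z_{\boldsymbol{k}}\mapsto z_{\boldsymbol{k}^{\vee}}$; it preserves weight, and $\dep(\boldsymbol{k}^{\vee})=\wt(\boldsymbol{k})-\dep(\boldsymbol{k})+1$), the generating function of the right-hand sides is $\zeta_{\F}(\phi\,\sigma_{t}\,\phi\,(z_{\boldsymbol{k}}))$. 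Thus Theorem~\ref{ohnoF} becomes the single identity $\zeta_{\F}\circ\sigma_{t}=\zeta_{\F}\circ\phi\circ\sigma_{t}\circ\phi$ of $\Q$-linear maps from $\mathfrak{h}^{1}$ into power series in $t$, i.e.\ (since $\phi^{2}=\mathrm{id}$) the assertion that $\sigma_{t}-\phi\,\sigma_{t}\,\phi$ takes values in $\ker\zeta_{\F}$.

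The relations I would feed in are exactly those that hold unconditionally and simultaneously for $\F=\A$ and $\F=\SSS$, so that the (still open) Kaneko--Zagier isomorphism is never used: (i) each $\zeta_{\F}$ is a homomorphism for the harmonic (stuffle) product on $\mathfrak{h}^{1}$; (ii) the reversal relation $\zeta_{\F}(k_{1},\dots,k_{r})=(-1)^{k_{1}+\dots+k_{r}}\zeta_{\F}(k_{r},\dots,k_{1})$; and (iii) Hoffman's duality for star values (Proposition~\ref{dualFS}), which, together with the elementary identity $\zeta_{\F}^{\star}=\zeta_{\F}\circ S$ — where $S$ sends an index to the sum over all ways of replacing some of its commas by plus signs — amounts to saying that $S\circ(\mathrm{id}+\phi)$ lands in $\ker\zeta_{\F}$. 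The place where real difficulty enters is the interaction of (iii) with $\sigma_{t}$: the classical dual $\dagger$ appearing in Theorem~\ref{ohno} is defined only on admissible indices and acts by ``reverse and swap $x\leftrightarrow y$'', whereas Hoffman's $\vee$ is defined on all indices and swaps $x$ and $y$ without reversing, and it is $\vee$, not $\dagger$, that matches the Ohno insertion operator in the finite setting.

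Concretely, I would prove $\zeta_{\F}\circ\sigma_{t}=\zeta_{\F}\circ\phi\circ\sigma_{t}\circ\phi$ by induction, peeling indices one letter at a time via $\sigma_{t}(z_{k}w)=x^{k-1}(1-tx)^{-1}y\cdot\sigma_{t}(w)$ on the left side and the companion recursion for $\phi\,\sigma_{t}\,\phi$ on the right side (which, by the combinatorial description of $\vee$, peels letters off $\boldsymbol{k}^{\vee}$, equivalently ``splits'' a part of $\boldsymbol{k}$), turning the resulting prefixes into admissible sums by means of the stuffle relation (i) and the reversal relation (ii). Duality (iii) must be invoked at the step where the two recursions force a comparison between ``adding to an existing part'', which is what $\sigma_{t}$ does to $\boldsymbol{k}$ and which preserves depth, and ``creating a new part'', which is what $\phi\,\sigma_{t}\,\phi$ does to $\boldsymbol{k}$ and which preserves weight; pulled back along $S$, Proposition~\ref{dualFS} is precisely the relation that reconciles the two.

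The main obstacle is this reconciliation: one must track, in each fixed weight, exactly how the Ohno insertion interacts with the complementation defining Hoffman's dual, and verify that every error term produced by applying stuffle and reversal cancels against the terms supplied by Proposition~\ref{dualFS}; the rest is bookkeeping. For $\F=\SSS$ alone one could instead substitute the definition of $\zeta_{\SSS}$ as an alternating sum of products of regularized multiple zeta values and try to invoke a regularized form of Ohno's relation (Theorem~\ref{ohno}); but since the classical relation is phrased through $\dagger$ rather than through $\vee$, that route still requires comparing the two dualities, and in any case it would not cover $\F=\A$. For these reasons I would carry out the uniform algebraic argument above rather than reduce to the classical case.
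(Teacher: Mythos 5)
The first thing to note is that the paper does not prove Theorem \ref{ohnoF} at all: it is quoted from Oyama \cite{oyama_2015}, so there is no in-paper proof to compare against. Judging your proposal on its own terms, the reformulation is correct and standard: identifying an index with the word $z_{k_1}\dotsm z_{k_r}$, introducing the Ohno operator $\sigma_t$, and observing that the theorem is equivalent to the single identity $\zeta_{\F}\circ\sigma_t=\zeta_{\F}\circ\phi\circ\sigma_t\circ\phi$ is fine, and the three inputs you list (the stuffle homomorphism property, reversal, and Hoffman's duality via $\zeta_{\F}^\star=\zeta_{\F}\circ S$) are genuine theorems valid for both $\F=\A$ and $\F=\SSS$ without appealing to the Kaneko--Zagier conjecture.

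The gap is that the identity $\zeta_{\F}\circ\sigma_t=\zeta_{\F}\circ\phi\circ\sigma_t\circ\phi$ is never actually derived from those inputs. You describe an induction that peels one letter at a time, but you do not state the inductive hypothesis, do not write down the companion recursion for $\phi\,\sigma_t\,\phi$, and the decisive step --- ``verify that every error term produced by applying stuffle and reversal cancels against the terms supplied by Proposition \ref{dualFS}'' --- is precisely the content of the theorem and is asserted rather than proved. What is missing is a concrete combinatorial identity in the algebra that organizes this cancellation. The paper's proof of the star analogue (Theorem \ref{ohnoFS}) shows what such an identity looks like: Lemma \ref{lem1}, namely $\boldsymbol{k}\sha(\{1\}^m)=\sum_{i=0}^m\sum_{\boldsymbol{e}}(\boldsymbol{k}\oplus\boldsymbol{e})\harb(\{1\}^i)$, is the explicit inversion converting the Ohno insertion into harmonic products with $(\{1\}^i)$, whose images vanish; combined with Lemma \ref{lem:dual_c2} and Proposition \ref{dualFS} applied to $(\boldsymbol{k}^\vee\sha(\{1\}^m))^\vee$, it closes the star case in a few lines. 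To complete your argument for the non-star values you would need the analogous inversion for the non-star harmonic product together with a form of duality transported from $\zeta_{\F}^\star$ to $\zeta_{\F}$ through $S$; the alternating sums that $S$ introduces are exactly where the difficulty lives, and this is essentially what Oyama's published proof carries out. As it stands, your proposal is a correct reduction plus a plausible plan, not a proof.
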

\begin{rem}
 Although Theorem \ref{ohno} with $m=0$ gives Proposition~\ref{duality},
 Theorem~\ref{ohnoF} with $m=0$ only gives a trivial relation.
\end{rem}

Our second main theorem is an analogue of Ohno's relation for FMZSVs and SMZSVs.
It is a generalization of Propositions \ref{sumFS} and \ref{dualFS}.
\begin{thm}[Main theorem $2$] \label{ohnoFS}
 For a nonempty index $\boldsymbol{k}$ and $m\in\Z_{\ge 0}$, we have 
 \begin{align*}
  \sum_{\substack{ \wt(\boldsymbol{e})=m \\ \dep(\boldsymbol{e})=\dep(\boldsymbol{k}) }}
  c_2(\boldsymbol{k},\boldsymbol{e})\zeta_\mathcal{F}^\star(\boldsymbol{k}\oplus\boldsymbol{e}) 
  =-\sum_{\substack{ \wt(\boldsymbol{e})=m \\ \dep(\boldsymbol{e})=\dep(\boldsymbol{k}^\vee ) }} \zeta_\mathcal{F}^\star(\boldsymbol{k}^\vee\oplus\boldsymbol{e} ),
 \end{align*}
 where
 \begin{align*}
  c_2((k_1\ldots,k_r),(e_1,\ldots,e_r))=\prod_{i=1}^{r}
  \binom{k_i+e_i+\delta_{i,1}+\delta_{i,r}-2 }{e_i},  \,\,
  \binom{n-1}{n}=
  \begin{cases}
    1 & \text{if $n=0$}; \\
    0 & \text{otherwise}.
  \end{cases} 
 \end{align*}
\end{thm}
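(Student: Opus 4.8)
The plan is to deduce Theorem~\ref{ohnoFS} from Oyama's relation (Theorem~\ref{ohnoF}), so that Theorem~\ref{ohnoF} plays here the role that the linear part of Kawashima's relation plays for Theorem~\ref{ohnoS}. Write $r=\dep(\boldsymbol{k})$. Two elementary facts, both valid for $\F=\A$ and $\F=\SSS$, let us move between star and non-star values. The first is the expansion
\[
 \zeta_\F^\star(\boldsymbol{l})=\sum_{\boldsymbol{l}'}\zeta_\F(\boldsymbol{l}'),
\]
in which $\boldsymbol{l}'$ runs over all indices obtained from $\boldsymbol{l}$ by replacing some (possibly none) of the commas by plus signs: for $\F=\A$ it follows at once by splitting each inequality $m_i\le m_{i+1}$ into $m_i<m_{i+1}$ and $m_i=m_{i+1}$, and for $\F=\SSS$ it follows by substituting the corresponding identity for (possibly regularized) classical zeta-star values into the definition of $\zeta_\SSS^\star$. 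The second is Hoffman's duality $\zeta_\F^\star(\boldsymbol{l})=-\zeta_\F^\star(\boldsymbol{l}^\vee)$ (Proposition~\ref{dualFS}); applying it to the right-hand side of Theorem~\ref{ohnoFS} rewrites that side as $\sum_{\boldsymbol{e}}\zeta_\F^\star\bigl((\boldsymbol{k}^\vee\oplus\boldsymbol{e})^\vee\bigr)$, which is the same shape as the right-hand side of Theorem~\ref{ohnoS}.

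First I would expand both sides into non-star values and reorganise. A choice of commas to erase in $\boldsymbol{k}\oplus\boldsymbol{e}$ is the same datum as a decomposition $S$ of $\{1,\dots,r\}$ into consecutive blocks, and erasing those commas turns $\boldsymbol{k}\oplus\boldsymbol{e}$ into $(\boldsymbol{k}/S)\oplus(\boldsymbol{e}/S)$, where $\boldsymbol{k}/S$ and $\boldsymbol{e}/S$ are the block-wise sums. Thus the left-hand side equals
\[
 \sum_{S}\ \sum_{\substack{\wt(\boldsymbol{E})=m\\ \dep(\boldsymbol{E})=\dep(\boldsymbol{k}/S)}}\Biggl(\sum_{\boldsymbol{e}/S=\boldsymbol{E}}c_2(\boldsymbol{k},\boldsymbol{e})\Biggr)\zeta_\F\bigl((\boldsymbol{k}/S)\oplus\boldsymbol{E}\bigr),
\]
and the parenthesised sum factors over the blocks of $S$; each block factor collapses by the Vandermonde-type convolution $\sum_{e_1+\dots+e_b=E}\prod_i\binom{c_i+e_i}{e_i}=\binom{c_1+\dots+c_b+b-1+E}{E}$, the boundary terms $\delta_{i,1},\delta_{i,r}$ in $c_2$ contributing only to the first and last blocks. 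The right-hand side, after the Hoffman-dual rewriting, is processed the same way; here one also uses the description of $\vee$ as complementation of the set of separators in the bar-diagram of an index, in order to make the indices $(\boldsymbol{k}^\vee\oplus\boldsymbol{e})^\vee$ and their mergings explicit.

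What remains is to identify the two reorganised expressions, and this is the step I expect to be the main obstacle. Here Oyama's relation (Theorem~\ref{ohnoF}) is the essential input: applied to the various merged indices $\boldsymbol{k}/S$ (and their Hoffman duals) and combined with the regrouping above, it should convert the left-hand side into the processed right-hand side. The delicate points are keeping track of how $\vee$, the shift $\oplus$, and the merging operation interact, and verifying the binomial identity that pins down why precisely the correction $\delta_{i,1}+\delta_{i,r}$ (rather than $\delta_{i,1}$, as in $c_1$, or nothing) is the coefficient making the two sides match. As a check on the shape of the statement, $m=0$ gives $c_2(\boldsymbol{k},\boldsymbol{0})=1$ and the identity reduces to Proposition~\ref{dualFS}, whereas keeping only the depth-$r$ terms and reading off the coefficient of $\ZZ_\F(k)$ recovers Proposition~\ref{sumFS}.
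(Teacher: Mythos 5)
Your proposal is a plan rather than a proof: the step you yourself flag as ``the main obstacle''---showing that, after expanding $\zeta_\F^\star$ into $\zeta_\F$ over mergings and reorganising, Oyama's relation applied to the merged indices $\boldsymbol{k}/S$ actually reproduces the processed right-hand side with exactly the coefficients $c_2$---is the entire content of the theorem, and it is left unverified. This is not a routine verification. Hoffman's dual $\vee$ interchanges the roles of commas and plus signs, so a merging of components of $\boldsymbol{k}\oplus\boldsymbol{e}$ corresponds under $\vee$ to a \emph{refinement} on the other side; the set of mergings $S$ you sum over on the left does not map in any evident way to the data appearing in Oyama's relation for $(\boldsymbol{k}/S)^\vee$, and the Vandermonde-type collapse you invoke produces coefficients that must then be matched against an inclusion--exclusion over splittings on the dual side. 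Until that bookkeeping is carried out (and the required binomial identity isolated and proved), the argument does not establish the statement; as written it amounts to ``Oyama's relation should imply the star version after reorganisation.''

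For comparison, the paper's proof does not use Oyama's relation at all and avoids passing to non-star values. It rests on three ingredients: (i) the purely combinatorial identity $\boldsymbol{k}\sha(\{1\}^m)=\sum_{i=0}^{m}\sum_{\wt(\boldsymbol{e})=m-i}(\boldsymbol{k}\oplus\boldsymbol{e})\harb(\{1\}^i)$ (Lemma \ref{lem1}); (ii) the facts that $\zeta_\F^\star$ is a homomorphism for $\harb$ and that $\zeta_\F^\star(\{1\}^i)=0$ for $i\ge1$, which kill every term except $i=0$ and give $\zeta_\F^\star(\boldsymbol{k}^\vee\sha(\{1\}^m))=\sum_{\boldsymbol{e}}\zeta_\F^\star(\boldsymbol{k}^\vee\oplus\boldsymbol{e})$; and (iii) the computation $(\boldsymbol{k}^\vee\sha(\{1\}^m))^\vee=\sum_{\boldsymbol{e}}c_2(\boldsymbol{k},\boldsymbol{e})(\boldsymbol{k}\oplus\boldsymbol{e})$ (Lemma \ref{lem:dual_c2}), which is where $c_2$ and in particular the correction $\delta_{i,1}+\delta_{i,r}$ actually come from. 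Combining (ii), Hoffman duality, and (iii) gives the theorem in three lines. If you want to salvage your route, the honest comparison is that you are trying to rederive the consequence of (i)--(ii) from Oyama's relation, which is a genuinely harder and independent task; I would recommend instead proving the $\sha$/$\harb$ exchange identity directly, as the paper does.
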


\section{Proof of the main theorems}
\subsection{Shuffle and harmonic products}
We define two operators $R$, defined for all indices, and $P$, defined for all nonempty indices, by
\begin{align*}
 R(k_1,\ldots,k_r)&=(k_r,\ldots,k_1), \\
 P(k_1,\ldots,k_r)&=(k_1,\ldots,k_{r-1},k_r+1). 
\end{align*}

We denote by $\I$ the $\Q$-linear space spanned by the indices.
Everything that is defined for indices, such as $R$ and $\zeta$, will be extended $\Q$-linearly.
We define $\Q$-bilinear products $\sha$ and $\harb$ on $\I$ inductively by setting
\begin{gather*}
 \boldsymbol{k} \sha \emptyset = \emptyset \sha \boldsymbol{k}= \boldsymbol{k} \harb \emptyset = \emptyset \harb \boldsymbol{k} =\boldsymbol{k}, \\ 
 (k _1, \boldsymbol{k}) \sha (l_1,\boldsymbol{l}) = (k_1, \boldsymbol{k} \sha (l_1,\boldsymbol{l})) + (l_1, (k_1,\boldsymbol{k}) \sha \boldsymbol{l}), \\
 (k _1, \boldsymbol{k}) \harb (l_1,\boldsymbol{l}) = (k_1, \boldsymbol{k} \harb (l_1,\boldsymbol{l})) + (l_1, (k_1,\boldsymbol{k}) \harb \boldsymbol{l})- (k_1+l_1,\boldsymbol{k} \harb \boldsymbol{l})
\end{gather*} 
for all indices $\boldsymbol{k}$, $\boldsymbol{l}$ and all positive integers $k_1$, $l_1$
(for details on $\harb$, see Muneta \cite{muneta_2009}). 
\begin{lem} \label{lem1}
 For an index $\boldsymbol{k}$ and $m\in\Z_{\ge 0}$, we have
  \[ 
   \boldsymbol{k} \, \sha  (\{1\}^m )
   =\sum_{i=0}^{m} \sum_{\substack{ \wt(\boldsymbol{e})=m-i \\ \dep(\boldsymbol{e})=\dep(\boldsymbol{k}) }}
   (\boldsymbol{k}\oplus\boldsymbol{e}) \harb ( \{1\}^i).
 \]
\end{lem}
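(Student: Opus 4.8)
The plan is to prove Lemma~\ref{lem1} by induction on the depth $r=\dep(\boldsymbol{k})$, using the recursive definitions of $\sha$ and $\harb$. When $r=0$, i.e.\ $\boldsymbol{k}=\emptyset$, both sides equal $(\{1\}^m)$, since on the right only the term $i=m$ survives (the inner sum over $\boldsymbol{e}$ of depth $0$ forces $\wt(\boldsymbol{e})=0$, hence $m-i=0$) and $\emptyset\harb(\{1\}^m)=(\{1\}^m)$. The more interesting base/bootstrap case, and the one I expect to drive the computation, is $r=1$: for $\boldsymbol{k}=(k)$ one must check that $(k)\sha(\{1\}^m)=\sum_{i=0}^m\sum_{e=0}^{m-i}(k+e)\harb(\{1\}^i)$, which can be done by a direct induction on $m$ using $(k)\sha(\{1\}^m)=(k,\{1\}^m)+(1,(k)\sha(\{1\}^{m-1}))$ and the harmonic recursion $(k+e)\harb(\{1\}^i)=(k+e,\{1\}^i)+\cdots$; alternatively one folds it into the general inductive step below.

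For the inductive step, write $\boldsymbol{k}=(k_1,\boldsymbol{k}')$ with $\dep(\boldsymbol{k}')=r-1$, and expand the left-hand side using a secondary induction on $m$ via
\[
 (k_1,\boldsymbol{k}')\sha(\{1\}^m)=(k_1,\boldsymbol{k}'\sha(\{1\}^m))+(1,(k_1,\boldsymbol{k}')\sha(\{1\}^{m-1})).
\]
The first term is handled by applying the lemma to $\boldsymbol{k}'$ (depth $r-1$) and prepending $k_1$; the second term is handled by the induction hypothesis on $m$. On the right-hand side, every sequence $\boldsymbol{e}$ of depth $r$ splits as $\boldsymbol{e}=(e_1,\boldsymbol{e}')$, and the expression $(\boldsymbol{k}\oplus\boldsymbol{e})\harb(\{1\}^i)=(k_1+e_1,\boldsymbol{k}'\oplus\boldsymbol{e}')\harb(\{1\}^i)$ should be unfolded one step with the $\harb$ recursion, separating according to whether the leading letter of the product comes from $k_1+e_1$ or from the leading $1$ of $(\{1\}^i)$, together with the $-(k_1+e_1+1,\cdots)$ correction term. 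The goal is to match, term by term, the pieces produced on the two sides; the case $e_1=0$ versus $e_1\ge1$ will need to be tracked carefully because the leading letter on the left is literally $k_1$ when $e_1=0$.

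The main obstacle, and where the real bookkeeping lives, is this matching of the $\harb$-expansions: one must reorganize a double sum over $(i,\boldsymbol{e})$ after peeling off the first letter so that the ``$k_1$ stays in front'' part lines up with $(k_1,\boldsymbol{k}'\sha(\{1\}^m))$ expanded by the lemma in depth $r-1$, while the ``a $1$ moves to the front'' part plus the ``$k_1$ absorbs a $1$'' correction together reproduce $(1,(k_1,\boldsymbol{k}')\sha(\{1\}^{m-1}))$ expanded by the induction hypothesis in the parameter $m$. I expect that after shifting summation indices (replacing $i$ by $i-1$ in the piece where a $1$ is pulled out, and $e_1$ by $e_1-1$ or $m$ by $m-1$ in the correction piece) the two sides become identical monomial-by-monomial, so the identity follows formally; the only genuine subtlety is ensuring the telescoping of the correction term is clean at the boundaries $i=0$ and $e_1=0$, which is precisely why the convention $\binom{n-1}{n}$ is not needed here but the degenerate $\emptyset$ cases must be stated explicitly.
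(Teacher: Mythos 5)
Your plan is correct and the induction does close, but it is a genuinely different argument from the paper's. The paper proves the lemma in one global computation: it expands $(\boldsymbol{k}\oplus\boldsymbol{e})\harb(\{1\}^i)$ as an alternating sum $\sum_{j}(-1)^{i-j}\sum_{\boldsymbol{f}}(\boldsymbol{k}\oplus\boldsymbol{e}\oplus\boldsymbol{f})\sha(\{1\}^j)$ over binary vectors $\boldsymbol{f}\in\{0,1\}^r$, then sums over $(i,\boldsymbol{e})$ and observes that for each fixed total increment $e_l+f_l$ the signs $(-1)^{f_l}$ cancel all contributions except $e_l=f_l=0$, leaving exactly $\boldsymbol{k}\sha(\{1\}^m)$; no induction is used. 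Your route instead runs a double induction on $\dep(\boldsymbol{k})$ and $m$, peeling off the first letter with the defining recursions of $\sha$ and $\harb$. I checked that the bookkeeping you anticipate does work out: after one step of the $\harb$ recursion the right-hand side splits as $A+B-C$, the ``$1$ moves to the front'' piece $B$ reindexes ($i\mapsto i+1$) to $(1,(k_1,\boldsymbol{k}')\sha(\{1\}^{m-1}))$ via the induction on $m$, and the correction piece $C$ reindexes ($e_1\mapsto e_1+1$) to exactly the $e_1\ge1$ portion of $A$, so $A-C$ is the $e_1=0$ portion, which is $(k_1,\boldsymbol{k}'\sha(\{1\}^m))$ by the induction on depth. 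Your approach is more elementary and makes visible which term of the stuffle recursion is responsible for the cancellation; the paper's is shorter once one knows the inversion formula between $\harb$ and $\sha$ against strings of $1$'s. One small slip to fix: in your depth-one illustration the inner sum should not range over $e=0,\dots,m-i$ --- the constraints $\dep(\boldsymbol{e})=1$ and $\wt(\boldsymbol{e})=m-i$ force the single term $e=m-i$ --- but since you fold that case into the general step anyway, this does not affect the argument.
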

\begin{proof}
 Write $\boldsymbol{k}=(k_1,\ldots,k_r)$ and $\boldsymbol{e}=(e_1,\ldots,e_r)$.
 Then
 \begin{align*}
  \sum_{i=0}^{m} \sum_{\substack{ \wt(\boldsymbol{e})=m-i \\ \dep(\boldsymbol{e})=\dep(\boldsymbol{k}) }}
   (\boldsymbol{k}\oplus\boldsymbol{e}) \harb ( \{1\}^i)
  &=\sum_{i=0}^m \sum_{\substack{ e_1+\dots+e_r=m-i \\ e_l\ge0 (1\le l\le r) }} \sum_{j=0}^i (-1)^{i-j} \\
  &\qquad \sum_{\substack{ f_1+\dots+f_r=i-j \\ 0\le f_l\le1 (1\le l\le r) }} 
  (k_1+e_1+f_1,\ldots,k_r+e_r+f_r)\sha (\{1\}^j) \\
  &=\sum_{j=0}^m \sum_{\substack{ e_1+\dots+e_r+f_1+\dots+f_r=m-j \\ e_l\ge0, 0\le f_l\le1 (1\le l\le r) }} (-1)^{f_1+\dots +f_r} \\
  &\qquad (k_1+e_1+f_1,\ldots,k_r+e_r+f_r)\sha (\{1\}^j) \\
  &=(k_1,\ldots,k_r)\sha (\{1\}^m). \qedhere
 \end{align*} 
\end{proof}

\begin{lem}\label{lem:dual_c2}
 For a nonempty index $\boldsymbol{k}$ and $m\in\Z_{\ge0}$, we have
 \[
  (\boldsymbol{k}^\vee\sha (\{1\}^m))^\vee
  =\sum_{\substack{ \wt(\boldsymbol{e})=m \\ \dep(\boldsymbol{e})=\dep(\boldsymbol{k}) }}
  c_2(\boldsymbol{k},\boldsymbol{e})(\boldsymbol{k}\oplus\boldsymbol{e}) .
 \]
\end{lem}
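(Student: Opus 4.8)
The plan is to work entirely on the combinatorial/formal level of the space $\I$, computing how the shuffle product with $(\{1\}^m)$ interacts with Hoffman's dual. Write $\boldsymbol{k}=(k_1,\dots,k_r)$ and set $\boldsymbol{l}=\boldsymbol{k}^\vee$, with $\dep(\boldsymbol{l})=:s$. The key observation is that shuffling an index $\boldsymbol{l}$ with $(\{1\}^m)$ has a very transparent description: each of the $m$ extra $1$'s is inserted into one of the $s+1$ ``gaps'' of $\boldsymbol{l}$ (before the first entry, between consecutive entries, or after the last entry), or stacked on top of an existing entry to increase it — more precisely, $\boldsymbol{l}\sha(\{1\}^m)$ is the sum over all ways of distributing $m$ indistinguishable tokens, where tokens placed ``on'' a component $l_j$ add $1$ to it (with multiplicity counted correctly) and tokens placed ``in a gap'' create a new string of $1$'s there. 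First I would make this precise and observe that, under Hoffman duality, gaps of $\boldsymbol{l}$ correspond to components of $\boldsymbol{l}^\vee=\boldsymbol{k}$ and components of $\boldsymbol{l}$ correspond to gaps of $\boldsymbol{k}$; this is the combinatorial heart of the matter and is exactly the duality between ``adding $1$ to a component'' and ``appending a $1$ in a gap'' that is visible in the definitions of $\boldsymbol{k}^\vee$ and $R$, $P$.

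Concretely, I would proceed by induction on $m$, using the recursive definition of $\sha$. The base case $m=0$ is $\boldsymbol{k}^{\vee\vee}=\boldsymbol{k}$ with $c_2(\boldsymbol{k},\boldsymbol{0})=1$, which holds since each binomial coefficient $\binom{k_i+\delta_{i,1}+\delta_{i,r}-2}{0}=1$. For the inductive step, shuffling one more $1$ into $\boldsymbol{l}\sha(\{1\}^{m-1})$ either increments some component of the already-formed index or inserts a new $1$; after applying ${}^\vee$ and matching up with the right-hand side, this should turn into Pascal's rule for the binomial coefficients $\binom{k_i+e_i+\delta_{i,1}+\delta_{i,r}-2}{e_i}$. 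Here the shifts $\delta_{i,1}$ and $\delta_{i,r}$ arise precisely because the first and last components of $\boldsymbol{k}$ correspond to the ``outer'' gaps of $\boldsymbol{l}$ (before $l_1$ and after $l_s$), which behave slightly differently from interior gaps: an interior gap between $l_{j-1}$ and $l_j$ needs at least one token to be ``activated'' in a way an outer gap does not, or vice versa, and tracking this boundary bookkeeping is where the two Kronecker deltas come from. The convention $\binom{n-1}{n}=[n=0]$ is exactly what is needed to kill the spurious terms at the boundary when $k_1=1$ or $k_r=1$.

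I expect the main obstacle to be setting up the bijection between monomials on the two sides cleanly enough that the boundary corrections come out right without case explosion — in particular, handling indices where $k_1=1$ and/or $k_r=1$ (so that the ``outer'' structure of $\boldsymbol{k}^\vee$ degenerates), and making sure the induction on $m$ respects the gap/component dictionary under ${}^\vee$ at each step. One clean way to avoid ad hoc casework is to encode indices as $01$-sequences (the standard ``binary word'' encoding of an index, where $\boldsymbol{k}\leftrightarrow$ word in $\{x,y\}$ ending in $y$), under which $\boldsymbol{k}^\vee$ is the operation of swapping the roles of the two letters on the interior of the word, and $\sha(\{1\}^m)$ becomes a purely word-combinatorial operation of inserting $m$ letters; then Lemma~\ref{lem:dual_c2} reduces to a generating-function identity in commuting variables, one per position, where the product of binomials $c_2$ appears as the coefficient extraction. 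I would carry out the word-combinatorics version if the direct induction gets unwieldy, but would try the direct inductive proof first since it keeps the statement self-contained and uses only the already-introduced operators $R$, $P$, $\sha$, $\harb$.
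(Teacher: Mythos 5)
Your overall strategy---distribute the $m$ ones of $(\{1\}^m)$ into $\boldsymbol{k}^\vee$ and track what this becomes under Hoffman duality---is exactly the idea behind the paper's proof, but your execution has one genuine error and stops short of an actual argument. The error is in your ``transparent description'' of $\boldsymbol{l}\sha(\{1\}^m)$: you allow a token to be ``stacked on top of an existing entry to increase it.'' The product $\sha$ defined in this paper is pure component interleaving (there is no merge term in its recursion; that term belongs to $\harb$), so shuffling with $(\{1\}^m)$ only ever inserts new components equal to $1$ and never increments an existing component. Taken literally, your description would produce extra terms that do not occur. The correct and simpler picture is that the $m$ new $1$'s fall into the $s+1$ maximal runs of $1$'s of $\boldsymbol{k}^\vee=(\{1\}^{a_0-1},b_1+1,\{1\}^{a_1-1},\dots,b_s+1,\{1\}^{a_s-1})$, and since the inserted $1$'s are indistinguishable from the $1$'s already present in a run, putting $e_i$ of them into the $i$-th run contributes multiplicity $\binom{a_i+e_i-1}{e_i}$ directly---no induction on $m$ and no Pascal's rule are needed. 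Applying ${}^\vee$ then turns each run $\{1\}^{a_i+e_i-1}$ back into the component $a_i+e_i$ (or $a_i+e_i+1$ for interior positions), i.e.\ into $\boldsymbol{k}\oplus\boldsymbol{e}$ with $\boldsymbol{e}$ supported on the components $a_0,a_1+1,\dots,a_s$ of $\boldsymbol{k}$.

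This also explains the bookkeeping you were worried about. The Kronecker deltas in $c_2$ are not a boundary correction discovered through an inductive case analysis: the interior components $a_j+1$ of $\boldsymbol{k}$ carry $\binom{(a_j+1)+e_j-2}{e_j}=\binom{a_j+e_j-1}{e_j}$, while the first and last components are $a_0$ and $a_s$ without the $+1$, so the factors $\delta_{i,1}$ and $\delta_{i,r}$ exactly restore $\binom{a_0+e_0-1}{e_0}$ and $\binom{a_s+e_s-1}{e_s}$. The convention $\binom{n-1}{n}=0$ for $n\ge1$ kills precisely the $\boldsymbol{e}$ supported on the interior blocks $\{1\}^{b_j-1}$ of $\boldsymbol{k}$, which indeed receive no contribution from the shuffle. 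So the lemma is a one-step direct computation in the canonical $(a_p,b_q)$-form; your proposal would likely reach the same endpoint after repairing the description of $\sha$, but as written the inductive step rests on a false premise and neither of your two routes is carried out.
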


\begin{proof}
 Write
 \[
  \boldsymbol{k}=(a_0,\{1\}^{b_1-1},a_1+1,\{1\}^{b_2-1},a_2+1,\ldots,\{1\}^{b_{s-1}-1},a_{s-1}+1,\{1\}^{b_s-1},a_s) \quad (a_p, b_q\ge1).
 \]
 Then
 \begin{align*}
  &(\boldsymbol{k}^\vee\sha (\{1\}^m))^\vee\\
  &=((\{1\}^{a_0-1},b_1+1,\{1\}^{a_{1}-1},\ldots,b_s+1,\{1\}^{a_s-1})\sha (\{1\}^m))^\vee  \\
  &=\sum_{\substack{ e_0+\cdots+e_s=m \\ e_i\ge0(1\le i\le s) }}
   (\{1\}^{a_0+e_0-1},b_1+1,\{1\}^{a_{1}+e_1-1},\ldots,b_s+1,\{1\}^{a_s+e_s-1})^\vee\prod_{i=0}^s \binom{a_i+e_i-1}{e_i}\\
  &=\sum_{\substack{ e_0+\cdots+e_s=m \\ e_i\ge0(1\le i\le s) }}
   (a_0+e_0,\{1\}^{b_1-1},a_1+e_1+1,\{1\}^{b_2-1},a_2+e_2+1,\ldots,\\
  &\hphantom{=\sum_{\substack{ e_0+\cdots+e_s=m \\ e_i\ge0(1\le i\le s) }}(}
   \{1\}^{b_{s-1}-1},a_{s-1}+e_{s-1}+1,\{1\}^{b_s-1},a_s+e_s)\prod_{i=0}^s \binom{a_i+e_i-1}{e_i}\\
  &=\sum_{\substack{ \wt(\boldsymbol{e})=m \\ \dep(\boldsymbol{e})=\dep(\boldsymbol{k}) }}
  c_2(\boldsymbol{k},\boldsymbol{e})(\boldsymbol{k}\oplus\boldsymbol{e}).
 \end{align*}
 Here, the last equality follows from the following observation:
 if $\boldsymbol{e}$ is a sequence of nonnegative integers with $\dep(\boldsymbol{e})=\dep(\boldsymbol{k})$,
 and we write
 \[
  \boldsymbol{e}=(e_0,e_{1,1},\dots,e_{1,b_1-1},e_1,e_{2,1},\dots,e_{2,b_2-1},e_2,\dots,e_{s,1},\dots,e_{s,b_s-1},e_s),
 \]
 then $c_2(\boldsymbol{k},\boldsymbol{e})\ne0$ only if $e_{i,j}=0$ for all $i=1,\dots,s$ and $j=1,\dots,b_i-1$, in which case
 \[
  c_2(\boldsymbol{k},\boldsymbol{e})=\prod_{i=0}^s \binom{a_i+e_i-1}{e_i}.\qedhere
 \]
\end{proof}

\subsection{Proof of Theorem \ref{ohnoS}}
The main ingredient of the proof of Theorem \ref{ohnoS} is the linear part of Kawashima's relation.
For a nonempty index $\boldsymbol{k}$, we define $\zeta^{\star,+}(\boldsymbol{k})=\zeta^{\star}(P(\boldsymbol{k}))$.
\begin{thm}[Linear part of Kawashima's relation; Kawashima \cite{kawashima_2009}] \label{kawashima_lin}
 For nonempty indices $\boldsymbol{k}$ and $\boldsymbol{l}$, we have 
 \begin{align*}
  \zeta^{\star,+} ((\boldsymbol{k}\harb\boldsymbol{l})^\vee)=0.
 \end{align*}
\end{thm}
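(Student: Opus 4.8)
The plan is to deduce Theorem~\ref{kawashima_lin} from the linear part of Kawashima's relation \cite{kawashima_2009} in its original algebraic form, by translating each of the operations $\harb$, $(\cdot)^\vee$, $P$ and $\zeta^\star$ into the language of the word algebra $\mathfrak{H}^1=\Q\langle x,y\rangle y\oplus\Q 1$, in which an index $(k_1,\dots,k_r)$ is identified with the word $x^{k_1-1}y\cdots x^{k_r-1}y$. Three dictionary entries are needed. First, $P$ is the insertion of a letter $x$ immediately before the final letter $y$. Second, $\harb$ is the harmonic (stuffle) product in the non-strict (``$\le$'') normalization studied by Muneta \cite{muneta_2009}. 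Third --- and this is the entry that brings Hoffman duality into play --- for any index $\boldsymbol{m}$ with word $w$ (necessarily ending in $y$), the word of $P(\boldsymbol{m}^\vee)$ is obtained from $w$ by interchanging the letters $x$ and $y$ and then appending the letter $y$; I would verify this by comparing the two standard descriptions of $(\cdot)^\vee$ via subsets of $\{1,\dots,\wt(\boldsymbol{m})-1\}$, complementation of subsets corresponding to the letter swap, and the replacement of the old trailing $y$ by a new one corresponding to $P$. In particular $P(\boldsymbol{m}^\vee)$ is always admissible, so the left-hand side of Theorem~\ref{kawashima_lin} contains no regularized values.

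Granting this dictionary, I would rewrite $\zeta^{\star,+}((\boldsymbol{k}\harb\boldsymbol{l})^\vee)$ as $\zeta$ --- a genuinely convergent value --- applied to $g(\overline{w}\,y)$, where $w$ is the word of $\boldsymbol{k}\harb\boldsymbol{l}$, the bar is the $x\leftrightarrow y$ swap, and $g\colon\mathfrak{H}^1\to\mathfrak{H}^1$ is the map sending a word to the sum over all ways of changing a subset of its interior occurrences of $y$ into $x$ (so that $\zeta^\star=\zeta\circ g$, realizing the passage from star to non-star values by summing over coarsenings of the index). Carrying the $\harb$-product through the operator $w\mapsto g(\overline{w}\,y)$ produces an explicit $\Q$-linear combination of admissible index-words, and the theorem is the assertion that its $\zeta$-value vanishes; this is exactly a reformulation of Kawashima's linear relation. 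The remaining work is to identify the combination that comes out with the one appearing in \cite{kawashima_2009} (the image of the harmonic product of the words of $\boldsymbol{k}$ and $\boldsymbol{l}$ under the operator built from the duality involution that occurs there).

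That last identification is the step I expect to be the real obstacle: keeping track of the signs, of the difference between the ordinary harmonic product and the non-strict product $\harb$, and of the precise anti-automorphism used in \cite{kawashima_2009}, and then checking that Kawashima's relation --- a priori a statement about regularized multiple zeta values --- specializes in the present situation to the convergent identity we want, the point being that $P$ forces admissibility. One should also be careful that $\boldsymbol{k}$ and $\boldsymbol{l}$ range over arbitrary nonempty indices, not necessarily admissible ones, so that the dictionary must be applied at the purely combinatorial level, before any numerical value is attached.
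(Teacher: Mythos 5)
The paper does not prove Theorem \ref{kawashima_lin} at all: it is imported verbatim as a known result of Kawashima \cite{kawashima_2009}, with no argument supplied, so there is no in-paper proof to compare yours against. What you are actually proposing is a verification that the form stated here is a faithful reformulation of Kawashima's original algebraic statement. Your dictionary entries are correct as far as they go: with the word encoding you chose, the word of $P(\boldsymbol{m}^\vee)$ is indeed obtained from that of $\boldsymbol{m}$ by the $x\leftrightarrow y$ swap followed by appending $y$; such words always encode admissible indices, so only convergent values occur; and $\zeta^\star=\zeta\circ g$ with $g$ the sum over coarsenings (interior $y\mapsto x$) is the standard star/non-star passage.

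The gap is precisely the step you name and then decline to carry out: identifying the image of $\boldsymbol{k}\harb\boldsymbol{l}$ under $w\mapsto g(\overline{w}\,y)$ with the linear combination that Kawashima actually proves to lie in the kernel of $\zeta$. That identification is where all the content of the proof lives, and it needs two nontrivial inputs your sketch only gestures at: (i) the precise relation between the non-strict product $\harb$ and the ordinary harmonic product $\ast$ used in \cite{kawashima_2009} (they are conjugate under the coarsening map $g$ and its alternating-sign inverse, which is essentially the content of Muneta's setup \cite{muneta_2009}); and (ii) the match between the composite substitution your operator induces on letters and Kawashima's automorphism $\varphi(x)=x+y$, $\varphi(y)=-y$, including whatever reversal or duality his formulation carries and all resulting signs. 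A quick check shows the naive composite sends $x\mapsto x+y$ but $y\mapsto x$, not $y\mapsto -y$, so the matching is genuinely delicate rather than a formality. One must also confirm that Kawashima's statement, which is a priori about regularized values, specializes correctly here. As written, your argument is a plan for a proof rather than a proof; for a result of this kind the honest alternatives are to cite it as the paper does or to complete the translation in full.
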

\begin{lem} \label{ohnoS2}
 For a nonempty index $\boldsymbol{k}$ and $m\in\Z_{\ge 0}$, we have
 \begin{align*} \label{eq1} 
  \zeta^{\star,+} ((\boldsymbol{k} \sha \{1\}^m )^\vee)
  =\sum_{\substack{ \wt(\boldsymbol{e})=m \\ \dep(\boldsymbol{e})=\dep(\boldsymbol{k}) }}
  \zeta^{\star,+} ((\boldsymbol{k}\oplus\boldsymbol{e})^\vee ).
 \end{align*}
\end{lem}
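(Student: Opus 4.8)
The plan is to apply the $\Q$-linear operator $\boldsymbol{k}\mapsto\zeta^{\star,+}(\boldsymbol{k}^\vee)$ to the identity of Lemma~\ref{lem1} and then discard all but one family of terms using the linear part of Kawashima's relation (Theorem~\ref{kawashima_lin}). First one checks that the operator is legitimately applicable to the right-hand side of Lemma~\ref{lem1}: since $\boldsymbol{k}$ is nonempty, so is $\boldsymbol{k}\oplus\boldsymbol{e}$ for every $\boldsymbol{e}$ occurring there, and the $\harb$-product of two indices at least one of which is nonempty is, by the defining recursion, a $\Q$-linear combination of nonempty indices; hence each $(\boldsymbol{k}\oplus\boldsymbol{e})\harb(\{1\}^i)$ lies in the span of nonempty indices, where $\zeta^{\star,+}$ and $(\,\cdot\,)^\vee$ are defined.

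Applying the operator to Lemma~\ref{lem1} thus gives
\[
 \zeta^{\star,+}\bigl((\boldsymbol{k}\sha\{1\}^m)^\vee\bigr)
 =\sum_{i=0}^{m}\ \sum_{\substack{\wt(\boldsymbol{e})=m-i\\\dep(\boldsymbol{e})=\dep(\boldsymbol{k})}}
 \zeta^{\star,+}\bigl(\bigl((\boldsymbol{k}\oplus\boldsymbol{e})\harb\{1\}^i\bigr)^\vee\bigr).
\]
For each $i\ge1$ the index $\{1\}^i$ is nonempty and $\boldsymbol{k}\oplus\boldsymbol{e}$ is nonempty, so Theorem~\ref{kawashima_lin} forces $\zeta^{\star,+}(((\boldsymbol{k}\oplus\boldsymbol{e})\harb\{1\}^i)^\vee)=0$. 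Only the $i=0$ summand survives, and there $(\boldsymbol{k}\oplus\boldsymbol{e})\harb\{1\}^0=(\boldsymbol{k}\oplus\boldsymbol{e})\harb\emptyset=\boldsymbol{k}\oplus\boldsymbol{e}$, so the right-hand side collapses to $\sum_{\wt(\boldsymbol{e})=m,\ \dep(\boldsymbol{e})=\dep(\boldsymbol{k})}\zeta^{\star,+}((\boldsymbol{k}\oplus\boldsymbol{e})^\vee)$, which is exactly the asserted identity.

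There is essentially no genuine obstacle in this argument: the entire content is packaged into Lemma~\ref{lem1} (the shuffle-versus-harmonic expansion of $\boldsymbol{k}\sha\{1\}^m$) and Theorem~\ref{kawashima_lin}. The only point deserving care is the verification just mentioned, namely that every index to which $\zeta^{\star,+}((\,\cdot\,)^\vee)$ is applied is truly nonempty, so that both the $\Q$-linear extension and the hypotheses of Kawashima's relation are in force.
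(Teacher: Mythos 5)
Your proof is correct and follows exactly the paper's argument: apply $\zeta^{\star,+}((\,\cdot\,)^\vee)$ to Lemma~\ref{lem1} and use Theorem~\ref{kawashima_lin} to annihilate every term with $i\ge1$, leaving only the $i=0$ term. The extra care you take to check that all indices involved are nonempty is a sensible (if tacit in the paper) verification, not a departure in method.
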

\begin{proof}
 By Lemma \ref{lem1} and Theorem \ref{kawashima_lin}, 
 \begin{align*}
  \zeta^{\star,+} ((\boldsymbol{k} \, \sha  (\{1\}^m ))^\vee)
  &=\sum_{i=0}^{m} \sum_{\substack{ \wt(\boldsymbol{e})=m-i \\ \dep(\boldsymbol{e})=\dep(\boldsymbol{k}) }}
  \zeta^{\star,+} (((\boldsymbol{k}\oplus\boldsymbol{e}) 
  \, \harb   
  ( \{1\}^i) )^\vee) \\
  &=\sum_{\substack{ \wt(\boldsymbol{e})=m \\ \dep(\boldsymbol{e})=\dep(\boldsymbol{k}) }}
  \zeta^{\star,+} ((\boldsymbol{k}\oplus\boldsymbol{e})^\vee). \qedhere
 \end{align*} 
\end{proof}
\begin{proof}[Proof of Theorem \ref{ohnoS}]
 Since the theorem is trivial when $\boldsymbol{k}=\emptyset$, we may assume that $\boldsymbol{k}\neq\emptyset$, in which case 
 we can write $\boldsymbol{k}=P(\boldsymbol{l})$ for some nonempty index $\boldsymbol{l}$. 
 By Lemmas \ref{lem:dual_c2} and \ref{ohnoS2}, we have
 \begin{align*}
  \sum_{\substack{ \wt(\boldsymbol{e})=m \\ \dep(\boldsymbol{e})=\dep(\boldsymbol{l}^\vee ) }}
  \zeta^{\star,+} ((\boldsymbol{l}^\vee\oplus\boldsymbol{e})^\vee) 
  &=\zeta^{\star,+} ((\boldsymbol{l}^\vee\sha (\{1\}^m))^\vee ) \\
  &=\sum_{\substack{ \wt(\boldsymbol{e})=m \\ \dep(\boldsymbol{e})=\dep(\boldsymbol{l}) }}
  c_2(\boldsymbol{l},\boldsymbol{e})\zeta^{\star,+}(\boldsymbol{l}\oplus\boldsymbol{e})\\
  &=\sum_{\substack{ \wt(\boldsymbol{e})=m \\ \dep(\boldsymbol{e})=\dep(\boldsymbol{k}) }}
  c_1(\boldsymbol{k},\boldsymbol{e})\zeta^{\star} (\boldsymbol{k}\oplus\boldsymbol{e}).
 \end{align*}
 Since $\boldsymbol{k}^\dagger=PR(\boldsymbol{l}^\vee)$, we have 
 \[
  \sum_{\substack{ \wt(\boldsymbol{e})=m \\  \dep(\boldsymbol{e})=\dep(\boldsymbol{l}^\vee ) }}
  \zeta^{\star,+} ((\boldsymbol{l}^\vee\oplus\boldsymbol{e})^\vee)
  =\sum_{\substack{ \wt(\boldsymbol{e})=m \\ \dep(\boldsymbol{e})=\dep(\boldsymbol{k}^\dagger ) }}
  \zeta^{\star} ( (\boldsymbol{k}^\dagger\oplus\boldsymbol{e})^\dagger ).
 \]
 Thus, we find the desired result. 
\end{proof}

\subsection{Proof of Theorem \ref{ohnoFS}}
\begin{lem} \label{ohnoFS2}
 For an index $\boldsymbol{k}$ and $m\in\Z_{\ge 0}$, we have
 \[ 
  \zeta_\mathcal{F}^\star (\boldsymbol{k} \, \sha (\{1\}^m) )
  =\sum_{\substack{ \wt(\boldsymbol{e})=m \\ \dep(\boldsymbol{e})=\dep(\boldsymbol{k}) }}
  \zeta_\mathcal{F}^\star (\boldsymbol{k}\oplus\boldsymbol{e}). 
 \]
\end{lem}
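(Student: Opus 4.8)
The plan is to mimic the proof of Lemma~\ref{ohnoS2}: apply $\zeta_\mathcal{F}^\star$ to the identity of Lemma~\ref{lem1} and show that every term containing a nontrivial harmonic product drops out. Spelling out Lemma~\ref{lem1} gives
\[
 \zeta_\mathcal{F}^\star\bigl(\boldsymbol{k}\sha(\{1\}^m)\bigr)=\sum_{i=0}^m \sum_{\substack{\wt(\boldsymbol{e})=m-i\\\dep(\boldsymbol{e})=\dep(\boldsymbol{k})}}\zeta_\mathcal{F}^\star\bigl((\boldsymbol{k}\oplus\boldsymbol{e})\harb(\{1\}^i)\bigr),
\]
so the lemma will follow once we know that $\zeta_\mathcal{F}^\star\bigl(\boldsymbol{l}\harb(\{1\}^i)\bigr)=0$ for every index $\boldsymbol{l}$ and every $i\ge1$: then only the $i=0$ summand survives, and it is exactly the right-hand side of the lemma.

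To obtain this vanishing I would use two ingredients. First, $\zeta_\mathcal{F}^\star$ is multiplicative with respect to $\harb$, that is $\zeta_\mathcal{F}^\star(\boldsymbol{a}\harb\boldsymbol{b})=\zeta_\mathcal{F}^\star(\boldsymbol{a})\,\zeta_\mathcal{F}^\star(\boldsymbol{b})$. For $\F=\A$ this is immediate from the definition, since the product of the two truncated sums $\sum_{1\le m_1\le\dots\le m_r<p}$ and $\sum_{1\le n_1\le\dots\le n_s<p}$ is governed, via inclusion--exclusion, precisely by $\harb$ (the finite analogue of Muneta's relation~\cite{muneta_2009}); for $\F=\SSS$ it follows from the harmonic product relation for harmonically regularized multiple zeta-star values~\cite{muneta_2009} together with the fact that the symmetric combination defining $\zeta_\SSS^\star$ preserves it, or from the known stuffle product relation for the symmetric multiple zeta(-star) values in the sense of Kaneko and Zagier~\cite{kaneko_zagier_2017}. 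Second, $\zeta_\mathcal{F}^\star(\{1\}^i)=0$ for every $i\ge1$: for $i=1$ this is $\zeta_\mathcal{F}^\star(1)=\zeta_\mathcal{F}(1)=0$, while for $i\ge2$ one checks directly from the definition of Hoffman's dual that $(\{1\}^i)^\vee=(i)$, so Proposition~\ref{dualFS} yields $\zeta_\mathcal{F}^\star(\{1\}^i)=-\zeta_\mathcal{F}^\star(i)=-\zeta_\mathcal{F}(i)$, which vanishes because $\zeta_\mathcal{F}(k)=0$ for all $k\ge1$ (standard for $\F=\A$; and $\zeta_\SSS(k)=(1+(-1)^k)\zeta(k)\equiv0\pmod{\zeta(2)\ZZZ}$). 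Putting the two together, $\zeta_\mathcal{F}^\star\bigl(\boldsymbol{l}\harb(\{1\}^i)\bigr)=\zeta_\mathcal{F}^\star(\boldsymbol{l})\,\zeta_\mathcal{F}^\star(\{1\}^i)=0$ whenever $i\ge1$, which completes the argument.

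The step that requires genuine care is the multiplicativity of $\zeta_\mathcal{F}^\star$ for $\harb$ in the symmetric case: one must verify that neither reducing modulo $\zeta(2)\ZZZ$ nor forming the symmetric combination disturbs the harmonic product relation. Everything else reduces to material already in place --- Lemma~\ref{lem1}, Proposition~\ref{dualFS}, and elementary properties of $\zeta_\mathcal{F}(k)$ --- so I do not expect a substantial obstacle beyond citing the product relation correctly.
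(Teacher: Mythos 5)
Your proposal is correct and follows essentially the same route as the paper: apply $\zeta_\mathcal{F}^\star$ to Lemma~\ref{lem1} and kill every term with $i\ge1$ using the multiplicativity of $\zeta_\mathcal{F}^\star$ with respect to $\harb$ together with $\zeta_\mathcal{F}^\star(\{1\}^i)=0$. The paper simply records these two facts without proof, whereas you supply justifications (via $(\{1\}^i)^\vee=(i)$ and the known stuffle relation for $\zeta_{\SSS}$), so there is nothing to correct.
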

\begin{proof}
 We note that $\zeta_\F^\star(\{1\}^i)=0$ for all $i\in\mathbb{Z}_{\ge1}$,
 and $\zeta_\F^\star(\boldsymbol{k}\harb\boldsymbol{l})=\zeta_\F^\star(\boldsymbol{k})\zeta_\F^\star(\boldsymbol{l})$
 for all indices $\boldsymbol{k}$ and $\boldsymbol{l}$. 
 By Lemma \ref{lem1}, we have
 \begin{align*}
  \zeta_\mathcal{F}^\star( \boldsymbol{k} \, \sha  (\{1\}^m ) )
  &=\sum_{i=0}^{m} \sum_{\substack{ \wt(\boldsymbol{e})=m-i \\ \dep(\boldsymbol{e})=\dep(\boldsymbol{k}) }}
  \zeta_\mathcal{F}^\star( (\boldsymbol{k}\oplus\boldsymbol{e}) 
  \, \harb   
  ( \{1\}^i) ) \\
  &=\sum_{\substack{ \wt(\boldsymbol{e})=m \\ \dep(\boldsymbol{e})=\dep(\boldsymbol{k}) }} \zeta_\mathcal{F}^\star(\boldsymbol{k}\oplus\boldsymbol{e}). \qedhere
 \end{align*} 
\end{proof}

\begin{proof}[Proof of Theorem \ref{ohnoFS}.]
 By Proposition \ref{dualFS} and Lemmas \ref{lem:dual_c2} and \ref{ohnoFS2}, we have 
 \begin{align*}
  -\sum_{\substack{ \wt(\boldsymbol{e})=m \\ \dep(\boldsymbol{e})=\dep(\boldsymbol{k}^\vee ) }}
  \zeta_\mathcal{F}^\star( \boldsymbol{k}^\vee\oplus\boldsymbol{e} ) 
  &=\zeta_\mathcal{F}^\star\left( (\boldsymbol{k}^\vee\sha (\{1\}^m))^\vee \right) \\
  &=\sum_{\substack{ \wt(\boldsymbol{e})=m \\ \dep(\boldsymbol{e})=\dep(\boldsymbol{k}) }}
  c_2(\boldsymbol{k},\boldsymbol{e})\zeta_\mathcal{F}^\star(\boldsymbol{k}\oplus\boldsymbol{e}). \qedhere
 \end{align*}
\end{proof}

\section{Applications}
In this section, we prove the second formula of Proposition \ref{sum} (resp.\ Proposition \ref{sumFS}) by using Theorem \ref{ohnoS} (resp.\ Theorem \ref{ohnoFS}).
\begin{prop}
 Theorem \ref{ohnoS} implies Proposition \ref{sum}.
\end{prop}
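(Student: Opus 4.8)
The plan is to specialize Theorem~\ref{ohnoS} to the index $\boldsymbol{k}=(\{1\}^{r-1},2)$ and a suitable value of $m$, and then identify both sides with the two expressions appearing in the second formula of Proposition~\ref{sum}. First I would observe that $(\{1\}^{r-1},2)^\dagger=(\{1\}^{r-1},2)$, so the index is self-dual; more importantly, $\dep((\{1\}^{r-1},2))=r$ on both sides, which will keep the bookkeeping symmetric. Taking $m=k-r-1$ (so that weight $r+1$ plus added weight $m$ gives total weight $k$), the right-hand side of Theorem~\ref{ohnoS} becomes
\[
 \sum_{\substack{\wt(\boldsymbol{e})=k-r-1\\\dep(\boldsymbol{e})=r}}\zeta^\star\bigl(((\{1\}^{r-1},2)\oplus\boldsymbol{e})^\dagger\bigr).
\]

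Next I would unwind the dual on the right-hand side. Writing $\boldsymbol{e}=(e_1,\dots,e_r)$, the index $(\{1\}^{r-1},2)\oplus\boldsymbol{e}=(1+e_1,\dots,1+e_{r-1},2+e_r)$ ranges, as $\boldsymbol{e}$ runs over depth-$r$ nonnegative sequences of weight $k-r-1$, over \emph{all} admissible indices of weight $k$ and depth $r$ (each component is at least $1$, the last at least $2$, and the total is $k$). Since $\dagger$ is an involution on admissible indices and preserves weight, the map $\boldsymbol{e}\mapsto((\{1\}^{r-1},2)\oplus\boldsymbol{e})^\dagger$ is a bijection from this set onto the set of all admissible indices of weight $k$; here one must note that the depth need not be preserved by $\dagger$, but that is harmless because we are only summing. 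Hence the right-hand side equals $\sum\zeta^\star(\boldsymbol{l})$ over all admissible $\boldsymbol{l}$ with $\wt(\boldsymbol{l})=k$, which is not quite what we want, so I would instead reverse the roles: apply Theorem~\ref{ohnoS} with $\boldsymbol{k}^\dagger$ in place of $\boldsymbol{k}$, i.e. start from the \emph{left}-hand sum $\sum_{\wt(\boldsymbol{e})=m,\ \dep(\boldsymbol{e})=r}c_1((\{1\}^{r-1},2),\boldsymbol{e})\,\zeta^\star((\{1\}^{r-1},2)\oplus\boldsymbol{e})$ and show this reproduces the sum-formula left-hand side with the binomial coefficient.

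The key computation is then the coefficient: for $\boldsymbol{k}=(\{1\}^{r-1},2)$ we have $k_1=1$, so $\binom{k_1+e_1+\delta_{1,1}-2}{e_1}=\binom{e_1}{e_1}=1$; for $2\le i\le r-1$, $k_i=1$ gives $\binom{e_i-1}{e_i}$, which by the stated convention is $1$ if $e_i=0$ and $0$ otherwise; and for $i=r$, $k_r=2$ gives $\binom{e_r}{e_r}=1$. Thus $c_1((\{1\}^{r-1},2),\boldsymbol{e})\ne0$ forces $e_2=\dots=e_{r-1}=0$, in which case it equals $1$. So the left-hand side collapses to $\sum_{e_1+e_r=k-r-1}\zeta^\star(1+e_1,\{1\}^{r-2},2+e_r)$; I would then need to check that $(\{1\}^{r-1},2)^\dagger$-side of the identity, after the same collapse, produces exactly $\binom{k-1}{r-1}\zeta(k)$, which — after passing through the duality $\zeta^\star=\zeta^\star$ composed with the $\dagger$-reversal and recognizing the surviving terms — should recover Zagier's $\binom{k-1}{r-1}$. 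The main obstacle is precisely this last matching: tracking how the dual operation interacts with the "$\{1\}$-blocks are frozen" constraint and confirming the binomial coefficient count, which is the genuinely combinatorial heart of the argument; the rest is substitution and the convention on $\binom{n-1}{n}$.
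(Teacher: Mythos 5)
There is a genuine gap, and it starts with a concrete error: $(\{1\}^{r-1},2)$ is \emph{not} self-dual. Writing $(\{1\}^{r-1},2)=(\{1\}^{a_1-1},b_1+1)$ with $a_1=r$, $b_1=1$, its dual is $(\{1\}^{b_1-1},a_1+1)=(r+1)$; for instance $(1,2)^\dagger=(3)$. Consequently, with your choice $\boldsymbol{k}=(\{1\}^{r-1},2)$ the right-hand side of Theorem~\ref{ohnoS} is a sum over $\boldsymbol{e}$ of depth $\dep(\boldsymbol{k}^\dagger)=1$, i.e.\ the single term $\zeta^\star\bigl((r+1+m)^\dagger\bigr)=\zeta^\star(\{1\}^{r+m-1},2)$, while your (correct) computation of $c_1$ shows the left-hand side collapses to the restricted sum $\sum_{e_1+e_r=m}\zeta^\star(1+e_1,\{1\}^{r-2},2+e_r)$. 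That is a valid but different identity; neither side is the sum formula. Your intermediate claim that $\boldsymbol{e}\mapsto((\{1\}^{r-1},2)\oplus\boldsymbol{e})^\dagger$ is a bijection onto \emph{all} admissible indices of weight $k$ is also false: duality sends an admissible index of weight $k$ and depth $d$ to one of weight $k$ and depth $k-d$, so the image is exactly the admissible indices of weight $k$ and depth $k-r$. This depth complementarity is not ``harmless''; it is the whole point.

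The fix is the substitution you gesture at in your ``reverse the roles'' sentence but do not carry out: take $\boldsymbol{k}=(k)$ of depth one (so $\boldsymbol{k}^\dagger=(\{1\}^{k-2},2)$), which is what the paper does. Then the left-hand side of Theorem~\ref{ohnoS} is the single term $c_1((k),(m))\,\zeta^\star(k+m)=\binom{k+m-1}{m}\zeta(k+m)$, and on the right-hand side $(\{1\}^{k-2},2)\oplus\boldsymbol{e}$ runs over all admissible indices of weight $k+m$ and depth $k-1$, whose duals are precisely all admissible indices of weight $k+m$ and depth $m+1$; so the right-hand side is the unrestricted sum $\sum\zeta^\star(\boldsymbol{l})$ over admissible $\boldsymbol{l}$ of weight $k+m$ and depth $m+1$, and $\binom{k+m-1}{m}=\binom{(k+m)-1}{(m+1)-1}$ gives Zagier's binomial coefficient. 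Note also that your self-correction still attaches the coefficient $c_1((\{1\}^{r-1},2),\boldsymbol{e})$ to the left-hand sum after replacing $\boldsymbol{k}$ by $\boldsymbol{k}^\dagger=(r+1)$; the theorem applied to $(r+1)$ uses $c_1((r+1),\boldsymbol{e})$, so as written the two halves of your argument do not fit together.
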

\begin{proof}
 By substituting $\boldsymbol{k}=(k)$ in Theorem \ref{ohnoS}, we have 
 \begin{align*}
  \text{L.H.S. of Theorem \ref{ohnoS}}  
  &=\binom{k+m-1}{m} \zeta^\star (k+m), \\
  \text{R.H.S. of Theorem \ref{ohnoS}} 
  &=\sum_{\substack{ \wt(\boldsymbol{e})=m \\ \dep(\boldsymbol{e})=k-1 }}
  \zeta^\star ( ((\{1\}^{k-2},2)\oplus\boldsymbol{e} )^\dagger ) \\
  &=\sum_{\substack{ \wt(\boldsymbol{e})=k-2 \\ \dep(\boldsymbol{e})=m+1 }}
  \zeta^\star ( (\{1\}^m,2)\oplus\boldsymbol{e} ). 
 \end{align*}
 This finishes the proof.
\end{proof}   

\begin{prop} \label{ohnoFS_to_sumFS}
 Theorem \ref{ohnoFS} implies Proposition \ref{sumFS}.
\end{prop}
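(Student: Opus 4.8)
The plan is to deduce the second (star-valued) formula of Proposition~\ref{sumFS} by specializing Theorem~\ref{ohnoFS} to a single depth-two index.

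Fix $k,r,i\in\Z$ with $1\le i\le r\le k-1$ and set $m=k-r-1\ge0$. I would apply Theorem~\ref{ohnoFS} to the nonempty index $\boldsymbol{k}=(i,\,r-i+1)$ and to this $m$. The combinatorial input is the identity, immediate from the definition of Hoffman's dual, that
\[
 (i,\,r-i+1)^\vee=(\{1\}^{i-1},\,2,\,\{1\}^{r-i})\qquad(1\le i\le r),
\]
including the boundary cases $i=1$ and $i=r$. Hence, as $\boldsymbol{e}$ runs over the sequences of nonnegative integers with $\dep(\boldsymbol{e})=r$ and $\wt(\boldsymbol{e})=m$, the index $\boldsymbol{k}^\vee\oplus\boldsymbol{e}$ runs over exactly the indices $(k_1,\dots,k_r)$ with $k_1+\dots+k_r=k$, $k_i\ge2$, and $k_j\ge1$ for all $j$, so the right-hand side of Theorem~\ref{ohnoFS} is the negative of the left-hand side of the formula we are proving.

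Next I would evaluate the left-hand side of Theorem~\ref{ohnoFS} for this $\boldsymbol{k}$. Unwinding the definition of $c_2$ (its two Kronecker deltas landing on the two entries), one gets $c_2\bigl((i,r-i+1),(e_1,e_2)\bigr)=\binom{i+e_1-1}{e_1}\binom{r-i+e_2}{e_2}$, so the left-hand side equals
\[
 \sum_{e_1+e_2=m}\binom{i+e_1-1}{e_1}\binom{r-i+e_2}{e_2}\,\zeta_\F^\star(i+e_1,\,r-i+1+e_2).
\]
Every pair $(a,b)$ appearing here satisfies $a+b=k$, so I would insert the classical depth-two evaluation $\zeta_\F^\star(a,b)=\zeta_\F(a,b)+\zeta_\F(a+b)=(-1)^b\binom{a+b}{b}\ZZ_\F(a+b)$, using $\zeta_\F(k)=0$ (elementary in both cases: $\sum_{0<m<p}m^{-k}\equiv0\pmod p$ for large $p$; and $\zeta_\SSS(k)=(1+(-1)^k)\zeta(k)\equiv0\bmod\zeta(2)$). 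Reindexing by $b=r-i+1+e_2$, the left-hand side becomes $\ZZ_\F(k)$ times $\displaystyle\sum_{b=r-i+1}^{k-i}(-1)^b\binom{k-1-b}{i-1}\binom{b-1}{r-i}\binom{k}{b}$.

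What is then left is a binomial identity, namely that this last sum equals $(-1)^{r-i+1}\binom{k-1}{i-1}+(-1)^{k+i}\binom{k-1}{r-i}$. I would prove it with generating functions: plugging $z=(1+y)/(1+x)$ into $\sum_{b=0}^{k}(-1)^b\binom{k}{b}z^b=(1-z)^k$ gives
\[
 \sum_{b=0}^{k}(-1)^b\binom{k}{b}(1+x)^{k-1-b}(1+y)^{b-1}=\frac{(x-y)^k}{(1+x)(1+y)},
\]
and one extracts the coefficient of $x^{i-1}y^{r-i}$ on both sides. On the right it is $0$ because $(x-y)^k$ is homogeneous of degree $k>r-1=(i-1)+(r-i)$; on the left the $b=0$ and $b=k$ terms contribute $(-1)^{r-i}\binom{k-1}{i-1}$ and $(-1)^{k+i-1}\binom{k-1}{r-i}$, and the terms $1\le b\le k-1$ reproduce the sum above, with the range automatically shrinking to $r-i+1\le b\le k-i$. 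Combining, Theorem~\ref{ohnoFS} yields $\sum\zeta_\F^\star(k_1,\dots,k_r)=\bigl[(-1)^{r-i}\binom{k-1}{i-1}+(-1)^{k+i+1}\binom{k-1}{r-i}\bigr]\ZZ_\F(k)$, which is $(-1)^i\bigl((-1)^r\binom{k-1}{i-1}+\binom{k-1}{r-i}\bigr)\ZZ_\F(k)$ since $(-1)^{r-i}=(-1)^{r+i}$ and $(-1)^{k+i+1}\ZZ_\F(k)=(-1)^i\ZZ_\F(k)$ (the two sides of the latter coincide for $k$ odd and both vanish for $k$ even, as $\ZZ_\F(k)=0$ then). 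The main obstacle is this bookkeeping: getting the Hoffman-dual identity, the shape of $c_2$ on depth-two indices, and the reindexing exactly right, and then matching signs through the generating-function extraction and the even-weight reconciliation — each step is routine, but the sign and index-range discipline is where the care lies.
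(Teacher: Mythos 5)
Your proposal is correct and follows essentially the same route as the paper: specialize Theorem \ref{ohnoFS} to the depth-two index $(i,r-i+1)$ (the paper's $(i,l-i)$ with $l=r+1$), identify the right-hand side with the sum-formula sum via the Hoffman dual $(\{1\}^{i-1},2,\{1\}^{r-i})$, insert the depth-two evaluation $\zeta_\F^\star(a,b)=(-1)^b\binom{a+b}{b}\ZZ_\F(a+b)$ (equivalent to the paper's version since both sides vanish for even weight), and reduce to a binomial identity. The only divergence is that you prove that identity in one stroke by extracting the coefficient of $x^{i-1}y^{r-i}$ from $\sum_b(-1)^b\binom{k}{b}(1+x)^{k-1-b}(1+y)^{b-1}=(x-y)^k/((1+x)(1+y))$, whereas the paper splits it via Pascal's rule into the two identities of Lemma \ref{lem2} and evaluates each with a beta integral --- a cosmetic difference, and your sign and index-range bookkeeping checks out.
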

To prove Proposition \ref{ohnoFS_to_sumFS}, we need the following lemma.
\begin{lem} \label{lem2}
 For $m,n,i\in\Z_{\ge1}$ with $i\le n$, we have 
\begin{align*}
 \sum_{a=0}^m (-1)^a \binom{m+n}{a+i} \binom{a+i-1}{a} \binom{m+n-a-i}{m-a}
 &=\binom{m+n}{m+i}, \\
 \sum_{a=0}^m (-1)^a \binom{m+n}{a+i-1} \binom{a+i-1}{a} \binom{m+n-a-i}{m-a}
 &=(-1)^m \binom{m+n}{i-1}. 
\end{align*}
\end{lem}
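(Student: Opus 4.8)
\textbf{Proof proposal for Lemma \ref{lem2}.} The plan is to reduce both identities to the Chu--Vandermonde convolution $\sum_{a}\binom{x}{a}\binom{y}{c-a}=\binom{x+y}{c}$ by first extracting a single binomial coefficient from the sum. Write $N=m+n$ for brevity. The two facts I will use repeatedly are the upper-negation identity $\binom{a+i-1}{a}=(-1)^a\binom{-i}{a}$ (and, in the same vein, $\binom{N-a-i}{m-a}$ rewritten with a negative upper index) and the subset-of-a-subset identity $\binom{N}{p}\binom{N-p}{r}=\binom{N}{p+r}\binom{p+r}{p}$; throughout, the summation ranges may be extended to all integers because the extra terms vanish, so Vandermonde applies verbatim.

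For the first identity I would apply the subset-of-a-subset identity with $p=a+i$ and $r=m-a$ to get $\binom{N}{a+i}\binom{N-a-i}{m-a}=\binom{N}{m+i}\binom{m+i}{a+i}$, pull the factor $\binom{N}{m+i}$ out of the sum, and rewrite $\binom{m+i}{a+i}=\binom{m+i}{m-a}$ together with $(-1)^a\binom{a+i-1}{a}=\binom{-i}{a}$. What is left is $\binom{N}{m+i}\sum_{a=0}^m\binom{-i}{a}\binom{m+i}{m-a}$, and Vandermonde's convolution collapses the sum to $\binom{-i+(m+i)}{m}=\binom{m}{m}=1$, which is exactly $\binom{m+n}{m+i}$.

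For the second identity the twist is that $\binom{N-a-i}{m-a}$ is not $\binom{N-(a+i-1)}{m-a}$, so the subset-of-a-subset identity cannot be applied to the obvious pair of factors. Instead I would first write $\binom{a+i-1}{a}=\binom{a+i-1}{i-1}$ and combine it with $\binom{N}{a+i-1}$ via $\binom{N}{a+i-1}\binom{a+i-1}{i-1}=\binom{N}{i-1}\binom{N-i+1}{a}$, which lets me pull $\binom{N}{i-1}$ out of the sum. It then remains to prove $\sum_{a=0}^m(-1)^a\binom{N-i+1}{a}\binom{N-a-i}{m-a}=(-1)^m$. Setting $M=N-i+1$ (a positive integer since $i\le n$), so that $N-a-i=M-1-a$, upper negation gives $\binom{M-1-a}{m-a}=(-1)^{m-a}\binom{m-M}{m-a}$, and the sum becomes $(-1)^m\sum_{a=0}^m\binom{M}{a}\binom{m-M}{m-a}=(-1)^m\binom{M+(m-M)}{m}=(-1)^m$ by Vandermonde; multiplying back by $\binom{N}{i-1}$ yields $(-1)^m\binom{m+n}{i-1}$.

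I do not expect a serious obstacle here: both parts are routine manipulations of binomial coefficients once the right factoring is spotted. The only mildly delicate point is the choice in the second identity to pair $\binom{N}{a+i-1}$ with $\binom{a+i-1}{i-1}$ rather than with the tail factor $\binom{N-a-i}{m-a}$; after that single idea, one application of upper negation plus Vandermonde finishes it. Some care should also go into checking that the binomial coefficients vanish outside the stated summation range, so that the sums may legitimately be read as full Chu--Vandermonde convolutions.
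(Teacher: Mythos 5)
Your proof is correct, and it takes a genuinely different route from the paper's. The paper expands the three binomial coefficients into factorials, pulls out the common factor $\frac{(m+n)!}{(n-i)!\,(i-1)!}$, and thereby reduces each identity to a single alternating sum, namely $\sum_{a=0}^m(-1)^a\frac{1}{a+i}\binom{m}{a}=\frac{m!\,(i-1)!}{(m+i)!}$ and its analogue with $\frac{1}{m+n-a-i+1}$ in place of $\frac{1}{a+i}$, which it then evaluates via the Beta integral $\int_0^1(1-x)^mx^{i-1}\,dx$. You instead stay entirely within binomial-coefficient algebra: trinomial revision extracts the target factor ($\binom{m+n}{m+i}$, resp.\ $\binom{m+n}{i-1}$), upper negation absorbs the alternating sign, and Chu--Vandermonde collapses what remains to $\binom{m}{m}=1$. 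I checked the individual steps: the identities $\binom{N}{a+i}\binom{N-a-i}{m-a}=\binom{N}{m+i}\binom{m+i}{a+i}$ and $\binom{N}{a+i-1}\binom{a+i-1}{i-1}=\binom{N}{i-1}\binom{N-i+1}{a}$ are exact for the parameters in play, the sign bookkeeping $(-1)^a(-1)^{m-a}=(-1)^m$ in the second identity is right, and in both cases the range $0\le a\le m$ already carries the full support of the Vandermonde convolution, so extending the sum loses nothing. Your key observation in the second identity --- pairing $\binom{N}{a+i-1}$ with $\binom{a+i-1}{i-1}$ rather than with the tail factor --- is exactly the right move, since the subset-of-a-subset identity does not apply to the other pairing there. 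Your version is purely combinatorial and avoids the integral; the paper's is a little shorter to write once the Beta function is admitted. Either argument proves the lemma.
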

\begin{proof}
 Since  
 \begin{align*}
  \binom{m+n}{a+i} \binom{a+i-1}{a} \binom{m+n-a-i}{m-a}
  &=\frac{(m+n)!}{(n-i)! (i-1)!} \cdot\frac{1}{(a+1)a!(m-a)!}, \\
  \binom{m+n}{a+i-1} \binom{a+i-1}{a} \binom{m+n-a-i}{m-a}
  &=\frac{(m+n)!}{(n-i)! (i-1)!} \cdot\frac{1}{(m+n-a-i+1)a!(m-a)!}, 
 \end{align*}
 we need to show 
 \begin{align*}
  \sum_{a=0}^m (-1)^a \frac{1}{a+i} \binom{m}{a}
  &=\frac{m!(i-1)!}{(i+m)!}, \\
  \sum_{a=0}^m (-1)^a \frac{1}{m+n-a-i+1} \binom{m}{a}
  &=(-1)^m \frac{m!(n-i)!}{(m+n-i+1)!}.
 \end{align*}
 The first equation follows from
 \begin{align*}
  \sum_{a=0}^m (-1)^a \frac{1}{a+i} \binom{m}{a}
  &=\int_0^1 \sum_{a=0}^m (-1)^a \binom{m}{a} x^{a+i-1} dx \\
  &=\int_0^1 (1-x)^m x^{i-1} dx \\
  &=\frac{m!(i-1)!}{(i+m)!}, 
 \end{align*}
 and the second can be shown similarly. 
\end{proof}
\begin{proof}[Proof of Proposition \ref{ohnoFS_to_sumFS}]
 We note that $\zeta_\mathcal{F}^\star(k_1,k_2)=-(-1)^{k_1} \binom{k_1+k_2}{k_1} \mathfrak{Z}_\F(k_1+k_2)$.
 By substituting $\boldsymbol{k}=(i,l-i)$ in Theorem \ref{ohnoFS}, we have 
 \begin{align*}
  &\text{L.H.S. of Theorem \ref{ohnoFS}} \\
  &=\sum_{\substack{ e_1+e_2=m \\ e_1,e_2\ge0 }} \zeta_\F^\star(i+e_1,l-i+e_2)\binom{i+e_1-1}{e_1}\binom{l-i+e_2-1}{e_2} \\
  &=-\sum_{\substack{ e_1+e_2=m \\ e_1,e_2\ge0 }}
  (-1)^{i+e_1} \binom{l+m}{i+e_1}\binom{i+e_1-1}{e_1}\binom{l-i+e_2-1}{e_2} \mathfrak{Z}_\F(l+m) \\
  &=-(-1)^i \sum_{a=0}^m
  (-1)^{a} \left( \binom{l+m-1}{a+i} + \binom{l+m-1}{a+i-1} \right) \binom{a+i-1}{a} \\
   &\qquad\qquad\qquad\qquad\qquad\qquad\qquad\qquad\qquad\qquad \binom{l+m-a-i-1}{m-a} \mathfrak{Z}_\F(l+m) \\
  &=-(-1)^{i} \left( \binom{l+m-1}{m+i}+(-1)^m\binom{l+m-1}{i-1} \right) \mathfrak{Z}_\F(l+m) \qquad 
  \text{(by Lemma \ref{lem2})} 
 \end{align*}
 and 
 \begin{align*}
  \text{R.H.S. of Theorem \ref{ohnoFS}} 
  =-\sum_{\substack{k_1+\dots+k_{l-1}=l+m\\k_i\ge2,k_j\ge1(1\le j\le l-1)}}\zeta_{\F}^{\star}(k_1,\dots,k_{l-1}).
 \end{align*}
 Then, we get
 \begin{align*}
  &\sum_{\substack{k_1+\dots+k_{l-1}=l+m \\k_i\ge2,k_j\ge1(1\le j\le l-1)}} \zeta_{\F}^{\star}(k_1,\dots,k_{l-1}) \\
  &=(-1)^{i} \left( \binom{l+m-1}{m+i}+(-1)^m\binom{l+m-1}{i-1} \right) \mathfrak{Z}_\F(l+m)\\
  &=(-1)^{i}\biggl( (-1)^{l-1} \binom{l+m-1}{i-1} + \binom{l+m-1}{l-i-1} \biggr)\ZZ_{\F}(l+m).
 \end{align*}
 By putting $k=l+m$ and $r=l-1$, we have the desired result. 
\end{proof}   

\section*{Acknowledgements}
This work was supported by JSPS KAKENHI Grant Numbers JP18J00982, JP18K03243, and JP18K13392.

\end{document}